\documentclass[10pt,leqno,a4paper]{article}
\usepackage{xcolor}
\usepackage{amsfonts}
\usepackage{amsmath, amsfonts, amsthm, amssymb, amscd}
\usepackage[mathcal]{euscript}
\usepackage{mathrsfs}
\usepackage{dsfont}
\usepackage{ulem}

\usepackage{CJK}

\newtheorem{theorem}{Theorem}[section]
\newtheorem{proposition}[theorem]{Proposition}
\newtheorem{lemma}[theorem]{Lemma}

\newtheorem{definition}[theorem]{Definition}
\newtheorem{remark}[theorem]{Remark}

\numberwithin{equation}{section}

\usepackage{a4wide}
\usepackage{multirow}
\usepackage{tabularx}
\usepackage{lscape}

\newcommand \Kcal {\mathcal K}
\newcommand \Hcal {\mathcal H}

\newcommand \Ecal {\mathcal {E}}

\newcommand \xb {\bar {x}}
\newcommand \delb {\bar {\del}}

\newcommand \Tb {\overline {T}}
\newcommand \Phib{\overline{\Phi}}
\newcommand \Psib{\overline{\Psi}}

\newcommand \mb{\overline m}
\newcommand \Qb{\overline Q}

\newcommand \del \partial
\newcommand \delu {\underline{\del}}
\newcommand \Tu {\underline{T}}

\newcommand \minu{\underline{m}}

\newcommand \Psiu{\underline{\Psi}}
\newcommand \Phiu{\underline{\Phi}}
\newcommand \Qu{\underline{Q}}

\newcommand \RR{\mathbb{R}}

\newcommand {\seq}{\overset{*}{=}}

\makeatletter
\def\hlinew#1{%
  \noalign{\ifnum0=`}\fi\hrule \@height #1 \futurelet
   \reserved@a\@xhline}
\makeatother

\let\oldmarginpar\marginpar
\renewcommand\marginpar[1]{\-\oldmarginpar[\raggedleft\footnotesize #1]%
{\raggedright\footnotesize #1}}

\begin{document}
\title{Remarks on basic calculus in hyperboloidal foliation}
\author{Yue MA}

\maketitle

\section{Introduction}
In this article we will give a systematic summary on the basic calculus in the hyperboloidal foliation context, especially the estimates based on commutators. These results are frequently applied in many context ( see in detail \cite{LeFloch-MA-book-2015, LM2, LM3, MA2017-1, MA2017-2, MA2017-3}), however, they have not been systematically stated and proved.

This article is devoted to the discussion of the following aspects: the basic definition on hyperboloidal foliation, the vector fields, the notion and calculus with multi-indices, the estimates based on null condition, the decomposition of commutators, the estimates on Hessian form.  All of the above results will be restated and proved in detail. The notation will also be reorganized in order to simplify the calculation. Some results such as calculus with multi-indices, the generalized Leibniz rule and the generalized F\`aa di Bruno's formula, which have been applied in an implicit manner in many occasion, will be explicitly stated and proved for the first time.

\section{Basic notation}

\subsection{Frames and vector fields}
We are working in $\RR^{n+1}$ ($n\in\mathbb{N}, n\geq 1$) equipped with Minkowski metric\footnote{We make the signature convention $(+,-,\cdots,-)$. However most of the results are irrelevant with is choice of signature.}. The canonical coordinates are denoted by $(t,x) = (t,x^1,x^2\cdots, x^n) = (x^0,x^1,x^2\cdots, x^n)$.

Throughout this article we make the following convention on index: a Greek letter represents an index contained in $\{0,1,\cdots n\}$ while a Latin letter denotes an index contained in $\{1,2,\cdots n\}$. For example, $x^{\alpha}$ may refer to $t$ or $x^1$ but $x^a$ refers to one of $x^1,x^2,\cdots x^n$.

For $x\in\RR^n$, $|x|$ denotes the Euclidean norm of $x$. In some case we also denote by $r = |x|$. Furthermore, let $\Kcal = \{(t,x)|t>|x|+1\}$ be the interior of the (translated) light-cone and $\del\Kcal = \{(t,x)|t=|x|+1\}$ be its boundary.

For $s>0$, we denote by $\Hcal_s = \{(t,x)|t = \sqrt{s^2+|x|^2}\}$ the upper sheet of the hyperboloid with (hyperbolic) radius $s$. Then the inner part of the hyperboloid is defined as following:
$$
\Hcal_s^* = \Hcal_s\cap \Kcal.
$$
Then $\Kcal$ is foliated as
$$
\Kcal = \bigcup_{1<s<\infty} \Hcal_s^*.
$$
For $1<s_0<s_1$, we introduce
$$
\Kcal_{[s_0,s_1]} := \{(t,x)\in\Kcal|s_0^2+|x|^2\leq t^2\leq s_1^2+|x|^2\}
$$
the part of $\Kcal$ limited by $\Kcal_{s_0}$ and $\Kcal_{s_1}$. Similarly:
$$
\Kcal_{[s_0,\infty)} := \{(t,x)\in\Kcal|s_0^2+|x|^2\leq t^2\}.
$$

Now in $\Kcal$, we introduce the following vector fields:
$$
\delu_a: = \frac{x^a}{t}\del_t + \del_a
$$
which are tangent to $\Hcal_s$ with $s = \sqrt{t^2-|x|^2}$. Also, we denote by
$$
\delu_0 = \del_t.
$$
Then in $\Kcal$, $\{\delu_{\alpha}\}$ forms a frame, called the {\sl semi-hyperbolidal frame} (SHF for short). The transition matrices between this frame and the canonical frame read as:
$$
\big(\Phiu_\alpha^{\beta}\big)
= \big({\Phiu_\alpha}^{\beta}\big)
=\left(
\begin{array}{cc}
1 &0
\\
x/t &\text{Id}
\end{array}
\right),
\qquad
\qquad
\big(\Psiu_\alpha^{\beta}\big)
= \big({\Psiu_\alpha}^{\beta}\big)
=\left(
\begin{array}{cc}
1 &0
\\
-x/t &\text{Id}
\end{array}
\right).
$$
with $\delu_\alpha = \Phiu_\alpha^\beta \del_\beta$ and $\del_\alpha = \Psiu_\alpha^\beta \delu_\beta$. In the above expression $\text{Id}$ is the $n\times n$ order identity matrix.

In $\Kcal$, we introduce the {\sl hyperbolic variables}
$$
s := \sqrt{t^2-x^2},\quad  \xb^a = x^a.
$$
The canonical frame of these variables reads as:
$$
\del_{\xb^a}:=\delb_a = \delu_a ,\quad \del_s := \delb_0 = (s/t)\del_t.
$$
This frame is called the {\sl hyperbolic frame} (HF for short). The transition matrices between this frame and the canonical frame read as
$$
\big(\Phib_\alpha^{\beta}\big)
= \big({\Phib_\alpha}^{\beta}\big)
=\left(
\begin{array}{cc}
s/t &0
\\
x/t &\text{Id}
\end{array}
\right),
\qquad
\qquad
\big(\Psib_\alpha^{\beta}\big)
= \big({\Psib_\alpha}^{\beta}\big)
=\left(
\begin{array}{cc}
t/s &0
\\
-x/s &\text{Id}
\end{array}
\right).
$$

A tensor defined in $\Kcal$ can be expressed with respect to different frames. For example, let $T$ be a two-contravariant tensor field, then
$$
T = T^{\alpha\beta}\del_{\alpha}\otimes\del_{\beta} = \Tu^{\alpha\beta}\delu_{\alpha}\otimes\delu_{\beta} = \Tb^{\alpha\beta}\delb_{\alpha}\otimes\delb_{\beta}
$$
where $T^{\alpha\beta},\Tu^{\alpha\beta}$ and $\Tb^{\alpha\beta}$ are components of $T$ expressed in different frames respectively\footnote{Here and in the following discussion, we take the Einstein's convention of summation. Furthermore, when taking a sum with respect to a Greek index, the sum is taken over $\{0,1,\cdots n\}$ while summing with a Latin index, the sum is taken over $\{1,2,\cdots n\}$.}. The transition relations between these components are:
\begin{equation}\label{eq 1 transition}
\Tu^{\alpha\beta} = \Psiu^{\alpha}_{\alpha'}\Psiu^{\beta}_{\beta'}T^{\alpha'\beta'},\quad
\Tb^{\alpha\beta} = \Psib^{\alpha}_{\alpha'}\Psib^{\beta}_{\beta'}T^{\alpha'\beta'}.
\end{equation}

In a special case, the Minkowski metric (contravariant form) $m = m^{\alpha\beta}\del_{\alpha}\otimes\del_{\beta}$ is expressed in SHF and HF as following:
$$
\left(\minu^{\alpha\beta}\right)_{0\leq \alpha,\beta\leq n} =
\left(
\begin{array}{cc}
(s/t)^2 &x/t
\\
x/t &\text{Id}
\end{array}
\right),
\quad \quad
\left(\mb^{\alpha\beta}\right)_{0\leq \alpha,\beta\leq n}
= \left(
\begin{array}{cc}
1 &x/s
\\
x/s &\text{Id}
\end{array}
\right).
$$

\subsection{Families of vector fields and multi-index}
For the convenience of discussion, we introduce the following notation on families of vector fields:
\begin{itemize}
\item Lorentzian boosts, denoted by $\mathscr{L} = \{L_a|a=1,2,\cdots n\}$ with $L_a := x^a\del_t + t\del_a$.
\\
\item Partial derivatives, denoted by $\mathscr{P} = \{\del_{\alpha}|\alpha=0,1,\cdots n\}$.
\\
\item Adapted partial derivatives, denoted by $\mathscr{A} = \{(s/t)\del_{\alpha}|\alpha = 0,1,\cdots n\}$.
\\
\item Hyperbolic derivatives, denoted by $\mathscr{H} = \{\delu_a|a=1,2,\cdots n\}$.
\end{itemize}
We denote by
$$
\mathscr{Z} = \mathscr{L} \cup \mathscr{P} \cup \mathscr{A} \cup \mathscr{H}
$$
and
$$
Z_i =
\left\{
\aligned
&L_i,  &&i=1,2,\cdots, n,
\\
&\del_{i-(n+1)},  &&i=n+1,\cdots, 2n+1,
\\
&(s/t)\del_{i-2(n+1)}, &&i=2n+2,\cdots, 3n+2,
\\
&\delu_{i-(3n+2)},&&i=3n+3,\cdots ,4n+2.
\endaligned
\right.
$$
Then we introduce the following notation on high-order derivatives. Let $I = (i_1,i_2,\cdots i_N)$ be a multi-index with $i_j\in\{1,2,\cdots, 4n+2\}$ and $|I|=N$. Then
$$
Z^I : = Z_{i_1}Z_{i_2}\cdots Z_{i_N}
$$
is an $N-$orde differential operator.

Suppose that $Z^I$ is composed by $j$ Lorentzian boots, $i$ partial derivatives, $k$ adapted partial derivatives and $l$ hyperbolic derivatives, then $Z^I$ is said to be of type $(j,i,k,l)$. If $Z^I$ is of type $(j,0,0,0)$, we denote by $Z^I = L^I$ and if $Z^I$ is of type $(0,i,0,0)$, we denote by $Z^I = \del^I$. We use the notation $\del^IL^J$ for the following operator:
$$
\del^IL^Ju:= \del^I\left(L^Ju\right).
$$

Then we introduce the notion of partition of a multi-index. Let $I = (i_1,i_2,\cdots ,i_N)$ be an $N-$order multi-index. The following set associated to $I$
$$
\mathcal{G}(I) := \{(k,i_k)|k=1,2,\cdots N\}
$$
is called the {\bf graph} of $I$.

For a set of the following form:
\begin{equation}\label{eq 1 partition}
\mathcal{A} = \left\{(k,i_k)|k\in A\  \text{ finite subset of }\  \mathbb{Z}, i_k\in\{1,2,\cdots 4n+2\}\right\},
\end{equation}
$A$ is called the {\bf domain} of $\mathcal{A}$, and denoted by
$$
A = \text{D}(\mathcal{A}).
$$
Clearly, $D(\mathcal{G}(I)) = \{1,2,,\cdots, N\}$.

An {\bf $m-$partition of $I$} consists of an $m$-tuple $(\mathcal{I}_1,\mathcal{I}_2,\cdots \mathcal{I}_m)$ where $\mathcal{I}_j$ are subsets of $\mathcal{G}(I)$, such that
$$
\mathcal{I}_i\cap\mathcal{I}_j = \emptyset,\quad \mathcal{I}=\bigcup_{k=1}^{m}\mathcal{I}_k .
$$
We denote by $\mathscr{D}_m(I)$ the set of all its $m-$partitions.

An {\bf $m^*-$partition of $I$} consists of an $m$-element set $\{\mathcal{I}_1,\mathcal{I}_2,\cdots,\mathcal{I}_m\}$ where $\mathcal{I}_j$ are {\sl non-empty} subsets of $\mathcal{G}(I)$, such that
$$
\mathcal{I}_i\cap\mathcal{I}_j = \emptyset,\quad \mathcal{I}=\bigcup_{k=1}^{m}\mathcal{I}_k .
$$
We denote by $\mathscr{D}^*_m(I)$ the set of all its $m^*-$partition. Let $\{\mathcal{I}_1,\mathcal{I}_2,\cdots,\mathcal{I}_m\}\in\mathscr{D}_m^*(I)$. We define a total order on this set by
$$
\mathcal{I}_j\prec\mathcal{I}_k,\quad \text{if}\quad \min D(\mathcal{I}_j) < \min D(\mathcal{I}_k).
$$
Remark that $D(\mathcal{I}_j)$, being the domain of $\mathcal{I}_j$, is a finite subset of $\mathbb{Z}$ and $\min D(\mathcal{I}_j)$ always exists. Furthermore, because $D(\mathcal{I}_j)\cap D(\mathcal{I}_k) = \emptyset$, $\min D(\mathcal{I}_j) \neq \min D(\mathcal{I}_k)$ if $j\neq k$. From now on, when we write  $\{\mathcal{I}_1,\mathcal{I}_2,\cdots,\mathcal{I}_m\}\in\mathscr{D}_m^*(I)$, we always suppose that
$$
\mathcal{I}_j\prec\mathcal{I}_k,\quad\text{if} \quad j<k.
$$

Let $\mathcal{A}$ be a set of the form in \eqref{eq 1 partition}, the following multi-index $Z^J=Z_{j_1}Z_{j_2}\cdots Z_{j_l}\cdots$ is called the {\bf realization} of $\mathcal{A}$ with
$$
Z_{j_l} =
\left\{
\aligned
&Z_{i_k},\quad &&\text{ if } l = k\in \text{D}(\mathcal{A}),
\\
&1,&&\text{ otherwise}.
\endaligned
\right.
$$
It is clear that if $\mathcal{A}=\emptyset$, then its realization is $1$. Also, $A$ is a finite set, so there are only finite many factors $Z_{j_k}$ of $Z^J$ different from $1$.  Thus $Z^J$ is a finite order differential operator and its order equals to the size of $\text{D}(\mathcal{A})$. Remark that for a multi-index $I$, the realization of $\mathcal{G}(I)$ is $Z^I$.

An {\bf $m-$admissible decomposition of $Z^I$} is an $m-$tuple of operators $(Z^{I_1},Z^{I_2},\cdots Z^{I_m})$ with $2\leq m$ such that:
there is an $m-$partition of $\mathcal{G}(I)$, denoted by $(\mathcal{I}_1,\mathcal{I}_2\cdots ,\mathcal{I}_m)$, such that $Z^{I_j}$ are the realization of $\mathcal{I}_j$ with $j=1,2,\cdots m$. Remark that for a given $I$, two different $m-$partitions of $I$ may give the same $m-$admissible decomposition of $Z^I$. Here is an example:
$$
Z^I(u_1\cdot u_2) = \del_{\alpha}^2(u_1\cdot u_2) = \del_{\alpha}^2u_1\cdot u_2 + u_1\cdot \del_{\alpha}^2u_2 +  \del_{\alpha}u_1\cdot \del_{\alpha}u_2 + \del_{\alpha}u_1\cdot \del_{\alpha}u_2.
$$
These four terms correspond to the following four $2-$decomposition of $I$:
$$
\big(\{(1,\alpha),(2,\alpha)\},\emptyset\big),\quad \big(\emptyset,\{(1,\alpha),(2,\alpha)\}\big),\quad \big(\{(1,\alpha)\},\{(2,\alpha)\}\big),
\quad \big(\{(2,\alpha)\},\{(1,\alpha)\}\big).
$$

In the same manner, an {\bf $m^*-$admissible decomposition of $Z^I$} is an $m-$element set of operators $\{Z^{I_1},Z^{I_2},\cdots Z^{I_m}\}$ with $2\leq m$ such that:
there is an $m^*-$partition of $\mathcal{G}(I)$, denoted by $\{\mathcal{I}_1,\mathcal{I}_2\cdots ,\mathcal{I}_m\}$, such that $Z^{I_j}$ are the realization of $\mathcal{I}_j$ with $j=1,2,\cdots m$.

In the following discussion, we denote by $I_1+I_2+\cdots +I_m = I$ an $m$-admissible decomposition of $I$ and by $I_1+I_2+\cdots +I_m \seq I$ an $m^*-$addmisisble decomposition of $I$. It is obvious that
$$
|I| = |I_1|+|I_2|\cdots +|I_m|.
$$
Furthermore, suppose that $I_n$ is of type $(j_n,i_n,k_n,l_n)$ and $I$ is of type $(j,i,k,l)$. Then
$$
(j,i,k,l) = \sum_{n=1}^m(j_n,i_n,k_n,l_n).
$$

Quite often we apply the following notation
$$
\sum_{I_1+I_2+\dots +I_m=I}Z^{I_1}u_1\cdot Z^{I_2}u_2\cdots Z^{I_m}u_m
$$
for a sum over $\mathscr{D}_m(I)$ and
$$
\sum_{I_1+I_2+\dots +I_m\seq I}Z^{I_1}u\cdot Z^{I_2}u\cdots Z^{I_m}u
$$
for a sum over $\mathscr{D}^*_m(I)$. More precisely, for each $m-(\text{ or }m^*-)$partition of $I$ there is one term being its realization in the sum.

Now we are ready to state the following Leibniz rules and generalized F\`aa di Bruno's formula. Their proofs are detailed in Appendix \ref{sec A-1}.
\begin{lemma}[Leibniz Rules]\label{lem 1 Leinbiz}
If $u_k$ are functions defined in $\Kcal$, sufficiently regular, then
\begin{equation}\label{eq 1 Leibniz}
Z^I\big(u_1\cdot u_2\cdots u_m\big) = \sum_{I_1+I_2+\cdots I_m=I}Z^{I_1}u_1\cdot Z^{I_2}u_2\cdots Z^{I_m}u_m.
\end{equation}
Furthermore
\begin{equation}\label{eq 2 Leibniz}
\aligned
\del^IL^J(u_1\cdot u_2\cdots u_m\big) =& \sum_{I_1+I_2+\cdots I_m=I}\sum_{J_1+J_2\cdots J_m=J}\del^{I_1}L^{J_1}u_1\cdot \del^{I_2}L^{J_2}u_2\cdots\del^{I_m}L^{J_m}u_m
\\
=&\sum_{I_1+I_2+\cdots I_m=I\atop J_1+J_2+\cdots J_m=J}\del^{I_1}L^{J_1}u_1\cdot \del^{I_2}L^{J_2}u_2\cdots\del^{I_m}L^{J_m}u_m.
\endaligned
\end{equation}
\end{lemma}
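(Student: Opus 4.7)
The plan is to proceed by induction in two stages. The base fact, which I would establish first, is the classical binary Leibniz rule for a single $Z_i\in\mathscr{Z}$: since each of $L_a$, $\del_\alpha$, $(s/t)\del_\alpha$ and $\delu_a$ is a first-order linear differential operator with smooth coefficients in $\Kcal$, the standard product rule yields $Z_i(u_1 u_2) = (Z_i u_1)\,u_2 + u_1\,(Z_i u_2)$.

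Next I would prove \eqref{eq 1 Leibniz} for $m=2$ by induction on $N=|I|$. The case $N=0$ is trivial (only partition is $(\mathcal{G}(I),\emptyset)$ with realisations $1$ and $1$). For the inductive step, write $Z^I = Z^{I''}Z_{i_N}$ with $I''=(i_1,\dots,i_{N-1})$, so that $\mathcal{G}(I'') = \mathcal{G}(I)\setminus\{(N,i_N)\}$ with no re-labelling of positions. Applying the single Leibniz rule once and then the inductive hypothesis separately to $Z^{I''}\bigl((Z_{i_N}u_1)u_2\bigr)$ and $Z^{I''}\bigl(u_1(Z_{i_N}u_2)\bigr)$ gives a sum indexed by $\mathscr{D}_2(I'')\times\{1,2\}$, the second factor recording whether $(N,i_N)$ is adjoined to the first or the second block. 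This indexing set is in natural bijection with $\mathscr{D}_2(I)$, and the bijection commutes with taking realisations (the leading operator of each block is still determined by the smallest position index in it); matching up the terms yields \eqref{eq 1 Leibniz} for $|I|=N$.

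The passage from $m=2$ to general $m$ is a routine induction on $m$: write $u_1\cdots u_m = u_1\cdot(u_2\cdots u_m)$, apply the case $m=2$, then the inductive hypothesis for $m-1$ to the second factor. The corresponding combinatorial statement is that every $m$-partition of $\mathcal{G}(I)$ is uniquely recovered from the $2$-partition that splits off the block going to $u_1$ together with an $(m-1)$-partition of the complement.

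For \eqref{eq 2 Leibniz}, let $K$ be the multi-index representing $\del^I L^J$ with the partial-derivative entries listed before the Lorentzian-boost entries. Every block $\mathcal{K}_j$ of an $m$-partition of $\mathcal{G}(K)$ then has all of its $\del$-entries at positions strictly lower than its $L$-entries, so its realisation factors as $\del^{I_j}L^{J_j}$; this yields a bijection $\mathscr{D}_m(K) \leftrightarrow \mathscr{D}_m(I)\times \mathscr{D}_m(J)$, and applying \eqref{eq 1 Leibniz} to $Z^K$ immediately gives the first equality in \eqref{eq 2 Leibniz}. The second equality is purely notational, merging two sum symbols into one. The main obstacle throughout is the combinatorial bookkeeping: one must consistently distinguish the raw multi-index, its graph, the partition of the graph, and the realisation associated to each block, and verify that realisations depend only on the linear order of positions within a block and are unaffected by the stripping off of the last operator. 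Once this principle is internalised, each inductive step is mechanical.
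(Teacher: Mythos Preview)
Your proof is correct and follows essentially the same strategy as the paper: induction on $|I|$ using the single-operator Leibniz rule, together with the observation that adjoining the new index to one of the blocks gives a bijection between partitions of the old graph and partitions of the new one. The only organizational differences are that the paper prepends the new operator at position $0$ and carries out the induction directly for general $m$ (formalising the block bijection as a separate lemma), whereas you append at position $N$, first settle $m=2$, and then induct on $m$; for \eqref{eq 2 Leibniz} both arguments amount to applying \eqref{eq 1 Leibniz} twice.
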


\begin{lemma}[F\`aa di Bruno's formula]\label{lem 1 faa}
Let $u$ be a function defined in $\Kcal$, sufficiently regular. Let $f$ be a $C^{\infty}$ function defined on an open interval $I$ of $\RR$ which contains the image of $u$. Then the following identity holds:
\begin{equation}\label{eq 1 faa}
Z^If(u) = \sum_{1\leq k\leq |I|}f^{(k)}(u)\sum_{I_1+I_2+\cdots I_k\seq I}Z^{I_1}uZ^{I_2}u\cdots Z^{I_k}u.
\end{equation}
\end{lemma}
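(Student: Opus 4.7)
The plan is to argue by induction on $N := |I|$, exploiting two algebraic facts that hold for every element $Z_i$ of $\mathscr{Z}$: each $Z_i$ is a first-order linear differential operator, so it satisfies the ordinary Leibniz rule, and it satisfies the chain rule $Z_i(f(u)) = f'(u)\, Z_i u$. These are the only properties of the $Z_i$ that enter the argument, so the whole computation is the classical F\`aa di Bruno argument, transported into the graph/partition formalism of the previous subsection. The base case $N=1$ is nothing but the chain rule: only $k=1$ can appear on the right-hand side, the unique $1^*$-partition of $\mathcal{G}(I)$ is $\{\mathcal{G}(I)\}$, and the formula reads $Z^I f(u) = f'(u)\, Z^I u$.

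For the inductive step I would write $I' = (i_0,i_1,\ldots,i_N)$ so that $Z^{I'} = Z_{i_0} Z^I$ with $|I| = N$, and apply $Z_{i_0}$ to the inductive hypothesis for $I$. Using the ordinary Leibniz rule on each product $f^{(k)}(u)\cdot Z^{I_1}u\cdots Z^{I_k}u$, together with $Z_{i_0}(f^{(k)}(u)) = f^{(k+1)}(u)\, Z_{i_0} u$, the result splits into two families of terms: a \emph{singleton} family, produced by the chain rule, carrying an extra factor $f^{(k+1)}(u)\, Z_{i_0}u$ multiplying $Z^{I_1}u\cdots Z^{I_k}u$; and an \emph{absorption} family, produced by Leibniz, in which exactly one factor $Z^{I_l}u$ is replaced by $Z_{i_0}Z^{I_l}u$. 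After reindexing $K := k+1$ in the singleton family and $K := k$ in the absorption family, what must be shown is that the two families together reproduce
$$\sum_{K=1}^{N+1} f^{(K)}(u) \sum_{J_1+\cdots+J_K \seq I'} Z^{J_1}u\cdots Z^{J_K}u.$$

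The combinatorial core is a bijection at fixed $K$: every $K^*$-partition of $\mathcal{G}(I')$ falls into exactly one of two types. Either the block containing the new graph element introduced by $Z_{i_0}$ is a singleton, in which case the remaining blocks form a unique $(K-1)^*$-partition of $\mathcal{G}(I)$ and are matched by the singleton family with $k = K-1$; or that block has at least two elements, in which case removing the new element and recording which block it came from gives a unique pair consisting of a $K^*$-partition of $\mathcal{G}(I)$ and a distinguished block, matched by the absorption family with the summation index $l$ naming the distinguished block. Because partitions are defined as \emph{sets} of blocks and the order $\mathcal{J}_j \prec \mathcal{J}_k$ is intrinsic via $\min D(\mathcal{J}_j)$, the re-sorting forced when the new element is inserted is canonical and nothing is double-counted.

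The step that I expect to require the most care is exactly this bookkeeping at the two endpoints of the $K$-range: the value $K=1$ must come entirely from the absorption family (with $k=1$, $l=1$) and the value $K = N+1$ must come entirely from the singleton family (with $k=N$), while for $2 \le K \le N$ both contributions add coherently. Once this combinatorial matching is checked, no additional analytic input is needed beyond the chain and Leibniz rules, and the induction closes.
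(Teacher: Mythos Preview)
Your proposal is correct and follows essentially the same approach as the paper: both argue by induction on $|I|$, apply $Z_{i_0}$ to the inductive hypothesis, split via the chain rule and the ordinary Leibniz rule into what you call the ``singleton'' and ``absorption'' families (the paper's $L_{k0}$ and $L_{kl}$ for $l\ge 1$), and then match these against the $K^*$-partitions of $\mathcal{G}(I')$ via the bijections you describe (the paper's maps $p_0^*$ and $p_l^*$ of Lemma~\ref{lem 2 partition}). Your explicit remark that the canonical $\prec$-ordering absorbs the re-sorting when $(0,i_0)$ is inserted into a block is exactly the bookkeeping point the paper handles through those bijections.
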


\section{Homogeneous functions and null condition}
\subsection{Homogeneous functions}
We recall the following notion on homogeneous functions:
\begin{definition}
Let $u$ be a $C^{\infty}$ function defined in $\{t>|x|\}$, satisfying the following properties:
\begin{itemize}
\item For a $k\in\RR$,  $u(\lambda t,\lambda x) = \lambda^ku(t,x),\quad \forall \lambda>0$.
\\
\item $\del^Iu(1,x)$ is bounded by a constant $C$ determined by $|I|$ and $u$ for $|x|< 1$.
\end{itemize}
Then $u$ is said to be {\sl homogeneous of degree $k$}.
\end{definition}
The following properties are immediate:
\begin{proposition}\label{prop 1 homo}
Let $u,v$ be homogeneous of degree $k,l$ respectively. Then
\begin{itemize}
\item When $k=l$, $\alpha u + \beta v$ is homogeneous of degree $k$ where $\alpha$ and $\beta$ are constants.
\\
\item $uv$ is homogeneous of degree $k+l$.
\\
\item $\del^IL^J u$ is homogeneous of degree $k-|I|$.
\\
\item There is a positive constant determined by $I,J$ and $u$ such that the following inequality holds in $\Kcal$:
\begin{equation}\label{eq 1 homo}
|\del^IL^Ju|\leq Ct^{k-|I|}.
\end{equation}
\end{itemize}
\end{proposition}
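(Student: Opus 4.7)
The plan is to verify each bullet in turn; only the third requires genuine work, and the fourth follows by a one-line scaling argument.

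For the first two bullets, I would apply the definition directly. Linearity of the scaling identity $u(\lambda t,\lambda x)=\lambda^k u(t,x)$ handles the case $k=l$, and the boundedness of $\del^I(\alpha u+\beta v)(1,x)$ on $|x|<1$ follows from the triangle inequality. For the product, multiplying the two scaling identities gives $(uv)(\lambda t,\lambda x)=\lambda^{k+l}(uv)(t,x)$, and the classical Leibniz rule expresses $\del^I(uv)(1,x)$ as a finite sum of products $\del^{I_1}u(1,x)\,\del^{I_2}v(1,x)$, each bounded by the hypothesis on $u$ and $v$.

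For the third bullet I would first treat the base cases $\del_\alpha$ and $L_a$. Differentiating $u(\lambda t,\lambda x)=\lambda^k u(t,x)$ in $x^\alpha$ (with $x^0=t$) gives $\lambda(\del_\alpha u)(\lambda t,\lambda x)=\lambda^k\del_\alpha u(t,x)$, so $\del_\alpha$ lowers the degree by one. For $L_a=x^a\del_t+t\del_a$, combining the homogeneity of $\del_t u$ and $\del_a u$ (degree $k-1$) with the identity
\[
(L_au)(\lambda t,\lambda x)=(\lambda x^a)(\del_t u)(\lambda t,\lambda x)+(\lambda t)(\del_a u)(\lambda t,\lambda x)
\]
yields $(L_au)(\lambda t,\lambda x)=\lambda^k(L_au)(t,x)$, so $L_a$ preserves the degree. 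Iterating and invoking the product property from the second bullet then gives the scaling identity for $\del^IL^Ju$ of degree $k-|I|$. The boundedness condition at $(1,x)$ with $|x|<1$ is verified by expanding $L^J$ into a finite sum of terms of the form $P(t,x)\,\del^Ku$ with $P$ polynomial; applying $\del^{I'}$ and the Leibniz rule produces a finite sum of (derivatives of $P$, uniformly bounded on $t=1$, $|x|<1$) times (partial derivatives of $u$ at $(1,x)$, bounded by hypothesis on $u$).

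For the fourth bullet, set $w=\del^IL^Ju$, which is homogeneous of degree $k-|I|$ by the third bullet. For $(t,x)\in\Kcal$ we have $t>|x|+1>0$, hence $|x/t|<1$; taking $\lambda=1/t$ in the scaling identity for $w$ gives $w(t,x)=t^{k-|I|}w(1,x/t)$, and $|w(1,x/t)|\leq C$ by the boundedness condition of the definition applied to $w$ (with $I'=0$). The mildly tedious step I expect to be the main bookkeeping is the expansion of $L^J$ into polynomial-coefficient differential operators needed to propagate the second condition of the definition through $L_a$; everything else is an immediate consequence of the two-line scaling identity.
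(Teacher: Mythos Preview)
Your proposal is correct and follows essentially the same approach as the paper: differentiate the scaling identity to see that $\del_\alpha$ lowers the degree by one and $L_a$ preserves it, then conclude by induction. If anything you are more thorough than the paper, which simply declares that ``only the third deserves a proof'' and never explicitly checks the boundedness condition at $(1,x)$ or spells out the $\lambda=1/t$ scaling argument for the fourth bullet; your expansion of $L^J$ into polynomial-coefficient differential operators is a fine way to close that gap.
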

\begin{proof}
Only the third deserves a proof. Remark that
$$
u(\lambda t,\lambda x) = \lambda^ku(t,x)
$$
and derive the above identity with respect to $t$ and $x^a$ respectively,
$$
\lambda\del_{\alpha}u(\lambda t, \lambda x) = \lambda^k\del_{\alpha}u(t,x)\quad \Rightarrow\quad \del_{\alpha}u(\lambda t,\lambda x) = \lambda^{k-1}\del_{\alpha}u(t,x).
$$
and
$$
L_au(\lambda t, \lambda x) = (\lambda x^a)\del_tu(\lambda t,\lambda x) + (\lambda t)\del_au(\lambda t,\lambda x) = \lambda^kL_au(t,x).
$$
That is, when derived with respect $L_a$, the degree of homogeneity does not change, while derived respect to $\del_{\alpha}$, the degree of homogeneity decreases by $1$. Thus by induction the desired property is established.
\end{proof}

\subsection{Analysis on $(s/t)$ and $(t-r)/t$}
In this subsection we give the bounds on $Z^I(s/t)$ and $Z^I\big((t-r)/t\big)$. These results are established in \cite{MH-1}.

\begin{lemma}\label{lem 1 s/t}
In the region $\Kcal$, the following decompositions hold:
\begin{equation}\label{eq 1 lem 1 s/t}
L^J(s/t) = \Lambda^J(s/t),\quad \del^I(s/t) = \sum_{k=1}^{|I|}\Lambda_k^I(s/t)^{1-2k}
\end{equation}
with $\Lambda^J$ homogeneous of degree zero, $\Lambda_k^I$ homogeneous of degree $-|I|$. Furthermore,
\begin{equation}\label{eq 2 lem 1 s/t}
\big|\del^IL^J(s/t)\big|\leq \left\{
\aligned
&C(s/t), \quad &&|I|=0,
\\
&Cs^{-1},\quad &&|I|>0
\endaligned
\right.
\end{equation}
with $C$ a constant determined by $I,J$.
\end{lemma}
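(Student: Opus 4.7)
The plan is to prove the two decomposition formulas in \eqref{eq 1 lem 1 s/t} separately by induction, and then deduce the bound \eqref{eq 2 lem 1 s/t} by combining them with the Leibniz rule from Lemma~\ref{lem 1 Leinbiz} and the homogeneity bounds from Proposition~\ref{prop 1 homo}.

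For the first formula, the starting observation is $L_a(s^2)=L_a(t^2-|x|^2)=2tx^a-2x^a t=0$, so $L_a(s)=0$ and hence $L_a(s/t)=-(x^a/t)(s/t)$. This gives the case $|J|=1$ with $\Lambda^{(a)}=-x^a/t$, homogeneous of degree zero. For the inductive step I compute
$$
L_aL^J(s/t)=L_a\bigl(\Lambda^J (s/t)\bigr)=\bigl(L_a\Lambda^J-(x^a/t)\Lambda^J\bigr)(s/t),
$$
and the bracket is again homogeneous of degree zero by Proposition~\ref{prop 1 homo}, closing the induction.

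For the second formula I first compute by hand $\del_t(s/t)=|x|^2/(st^2)=(|x|^2/t^3)(s/t)^{-1}$ and $\del_a(s/t)=-x^a/(st)=-(x^a/t^2)(s/t)^{-1}$, which handles the base case $|I|=1$ with a single term $(k=1)$ whose coefficient is homogeneous of degree $-1$. Assuming inductively $\del^I(s/t)=\sum_{k=1}^{|I|}\Lambda_k^I(s/t)^{1-2k}$ with each $\Lambda_k^I$ homogeneous of degree $-|I|$, applying $\del_\alpha$ to a single term $\Lambda_k^I(s/t)^{1-2k}$ produces two pieces via product and chain rule: one of the form $\del_\alpha\Lambda_k^I\cdot(s/t)^{1-2k}$ (coefficient homogeneous of degree $-|I|-1$) and one of the form $(1-2k)\Lambda_k^I\,\widetilde\Lambda_\alpha\,(s/t)^{1-2(k+1)}$, where $\widetilde\Lambda_\alpha$ is the degree $-1$ coefficient from $\del_\alpha(s/t)$. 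Regrouping by powers of $(s/t)$ then yields the expansion at order $|I|+1$.

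Finally, to bound $\del^IL^J(s/t)$, I expand $\del^I(\Lambda^J\cdot(s/t))$ by Lemma~\ref{lem 1 Leinbiz} and insert the expansion of $\del^{I_2}(s/t)$. The case $|I|=0$ is $|\Lambda^J(s/t)|\le C(s/t)$ by Proposition~\ref{prop 1 homo}. For $|I|\ge 1$, a generic term is estimated as $Ct^{-|I|+2k-1}s^{-(2k-1)}$ for some $0\le k\le|I_2|$ (with $k=0$ meaning $|I_2|=0$ and no negative power of $(s/t)$). The main obstacle is disposing of the positive powers of $t$ in the numerator when $k$ is large; this is where I expect to use the defining inequality of $\Kcal$. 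Since $t-|x|\ge 1$ in $\overline\Kcal$, one has $s^2=(t-|x|)(t+|x|)\ge t+|x|\ge t$, hence $s^{2k-2}\ge t^{k-1}$. This reduces the estimate to $Ct^{k-|I|}s^{-1}\le Cs^{-1}$, using $k\le|I_2|\le|I|$ and $t\ge 1$; the low-$k$ cases ($k=0,1$) are handled directly by $t\ge 1$.
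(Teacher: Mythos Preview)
Your argument is correct. The treatment of the first decomposition and of the final bound is essentially the same as in the paper: both rely on $L_a(s/t)=-(x^a/t)(s/t)$ for the induction, and both use the key inequality $t\le s^2$ in $\Kcal$ (you phrase it as $s^{2k-2}\ge t^{k-1}$, the paper as $(t/s^2)^{|I|-1}\le 1$) to absorb the dangerous positive powers of $t$. The one genuine methodological difference is in the second decomposition: the paper writes $s/t=f(u)$ with $u=1-r^2/t^2$ and $f(x)=\sqrt{x}$ and applies the generalized F\`aa di Bruno formula (Lemma~\ref{lem 1 faa}), which immediately produces the sum $\sum_k C_k u^{1/2-k}\del^{I_1}u\cdots\del^{I_k}u$ and identifies $\Lambda_k^I$ as a homogeneous function of degree $-|I|$; you instead do a direct induction on $|I|$, propagating the structure through the product and chain rules. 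Your route is more elementary and avoids invoking Lemma~\ref{lem 1 faa}, while the paper's route makes the origin of the powers $(s/t)^{1-2k}$ transparent (they come from $f^{(k)}(u)=C_k u^{1/2-k}$) and generalizes more readily to other functions of $s/t$, as is exploited later in the proof of Lemma~\ref{lem 3 s/t}.
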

\begin{proof}
The first decomposition in \eqref{eq 1 lem 1 s/t} is by induction. We just remark that
$$
L_a(s/t) = \frac{-x^a}{t}(s/t)
$$
where $(-x^a/t)$ is homogeneous of degree zero.

For the second decomposition of \eqref{eq 1 lem 1 s/t}, we recall the Fa\`a di Bruno's formula and take $u = s^2/t^2 = (1-r^2/t^2)$ and
$$
\aligned
f:\RR^+&\rightarrow \RR
\\
x&\rightarrow x^{1/2}.
\endaligned
$$
Then
$$
\del^I(s/t) = \sum_{k=1}^{|I|}\sum_{I_1+\cdots+I_k\seq I}C_ku^{-k+1/2}\cdot \del^{I_1}u\del^{I_2}u\cdots \del^{I_k}u.
$$
Also recall that $(1-r^2/t^2)$ is homogeneous of degree zero,
$$
\del^{I_1}u \del^{I_2}u \cdots \del^{I_k}u \quad \text{ is homogeneous of degree }-|I|.
$$
So the desired decomposition is established.

Furthermore, recall proposition \ref{prop 1 homo} (the last point) and the fact that in $\Kcal$, $s\leq t\leq s^2$,
$$
\del^I(s/t)\leq C\sum_{k=1}^{|I|}(s/t)^{1-2k}t^{-|I|}\leq Cs^{-1}(t/s^2)^{|I|-1}\leq Cs^{-1}.
$$
Then by \eqref{eq 1 lem 1 s/t},
$$
\del^IL^J(s/t) = \del^I\big(\Lambda^J(s/t)\big) = \sum_{I_1+I_2=I}\del^{I_1}L^{J_1}\Lambda^J\cdot \del^{I_2}L^{J_2}(s/t).
$$
Recall the homogeneity of $\Lambda^{J}$, \eqref{eq 2 lem 1 s/t} is proved.
\end{proof}

Then we prove the following results:
\begin{lemma}\label{lem 3 s/t}
In the region $\Kcal$, the following bounds hold for $k,l\in\mathbb{Z}$:
\begin{equation}\label{eq 1 lem 3 s/t}
\big|\del^IL^J\big((s/t)^kt^l\big)\big|\leq
\left\{
\aligned
&C(s/t)^kt^l,\quad &&|I|=0,
\\
&C(s/t)^kt^l(t/s^2),\quad &&|I|\geq 1.
\endaligned
\right.
\end{equation}
\end{lemma}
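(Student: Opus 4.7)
My plan is to peel off $L^J$ first and produce a clean factorization before any $\del^I$ enters. The key structural input is Lemma \ref{lem 1 s/t}: since $L^K(s/t)=\Lambda^K(s/t)$ with $\Lambda^K$ homogeneous of degree $0$, the $(s/t)$-scaling survives untouched under Lorentz boosts. Expanding $L^J\big((s/t)^kt^l\big)$ by Leibniz \eqref{eq 1 Leibniz} and applying Fa\`a di Bruno (Lemma \ref{lem 1 faa}) with $f(x)=x^k$ to the factor $L^{J_1}((s/t)^k)$, the $(s/t)^{k-m}$ coming from $f^{(m)}(s/t)=k(k-1)\cdots(k-m+1)(s/t)^{k-m}$ cancels the $(s/t)^m$ generated by collecting $L^{K_1}(s/t)\cdots L^{K_m}(s/t)$, so I obtain
$$L^J\big((s/t)^kt^l\big) = (s/t)^k\, h_{J,k,l},$$
where $h_{J,k,l}$ is homogeneous of degree $l$ (a sum of products of degree-$0$ factors with $L^{J_2}(t^l)$). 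The $|I|=0$ line follows at once from Proposition \ref{prop 1 homo}, which bounds $|h_{J,k,l}|\leq Ct^l$.

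For $|I|\geq 1$, I apply $\del^I$ to this factorization via Leibniz:
$$\del^IL^J\big((s/t)^kt^l\big) = \sum_{I_1+I_2=I}\del^{I_1}\big((s/t)^k\big)\cdot\del^{I_2}h_{J,k,l}.$$
Proposition \ref{prop 1 homo} controls the second factor by $Ct^{l-|I_2|}$. For $\del^{I_1}((s/t)^k)$ with $|I_1|\geq 1$, I invoke Fa\`a di Bruno again; but the inner derivatives are now $\del^{K_i}(s/t)$, so I substitute the \emph{second} identity of Lemma \ref{lem 1 s/t}, namely $\del^{K}(s/t)=\sum_{j=1}^{|K|}\Lambda_j^K(s/t)^{1-2j}$. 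Each term in the resulting expansion has the shape $(s/t)^{k-2P}\,\Lambda$, where $\Lambda$ is homogeneous of degree $-|I_1|$ and $P=j_1+\cdots+j_m$ ranges over $1\leq P\leq|I_1|$.

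The main obstacle is to verify that every such term is majorized by $C(s/t)^k(t/s^2)$, uniformly in $P$. Pulling out $(s/t)^k$ and using $|\Lambda|\leq Ct^{-|I_1|}$, the required inequality is
$$t^{2P-|I_1|-1}\, s^{2-2P} \leq C \qquad \text{for } 1\leq P\leq |I_1|.$$
Here the interior-cone geometry is essential: from $t>|x|+1$ one gets $s^2 = t^2-|x|^2 > 2t-1 \geq t$, so $(s^{-2})^{P-1}\leq t^{-(P-1)}$, and the inequality reduces to $t^{P-|I_1|}\leq 1$, which holds because $P\leq|I_1|$ and $t\geq 1$. Thus $|\del^{I_1}((s/t)^k)|\leq C(s/t)^k(t/s^2)$ whenever $|I_1|\geq 1$. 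Combined with $|\del^{I_2}h_{J,k,l}|\leq Ct^{l-|I_2|}\leq Ct^l$ (since $|I_2|\geq 0$ and $t\geq 1$), the $|I_1|\geq 1$ contributions are each $\leq C(s/t)^kt^l(t/s^2)$. For the remaining $|I_1|=0$ term, $(s/t)^k\cdot Ct^{l-|I|}$, the discrepancy $t^{-|I|}$ is absorbed into $t/s^2$ via $s^2\leq t^2\leq t^{|I|+1}$ for $|I|\geq 1$. Summing yields \eqref{eq 1 lem 3 s/t}.
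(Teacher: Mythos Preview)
Your proof is correct, and the organization differs from the paper's in a useful way. The paper first establishes \eqref{eq 2' lem 1 s/t} for $\del^IL^J\big((s/t)^n\big)$ by splitting into two cases---positive $n$ via the Leibniz rule on the $n$-fold product $(s/t)\cdots(s/t)$, negative $n$ via Fa\`a di Bruno with $f(x)=x^{-n}$---and then combines with $t^l$ by a final Leibniz step. You instead sequence the operators: first absorb all of $L^J$ into the clean factorization $(s/t)^k h_{J,k,l}$ using $L^K(s/t)=\Lambda^K(s/t)$, and then apply $\del^I$ with a single Fa\`a di Bruno argument on $f(x)=x^k$ that treats all $k\in\mathbb{Z}$ uniformly. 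Both routes rest on the same inputs (Lemma~\ref{lem 1 s/t}, Proposition~\ref{prop 1 homo}, and the cone inequality $t\leq s^2$), and the core estimate $t^{2P-|I_1|-1}s^{2-2P}\leq C$ is exactly the mechanism the paper also uses. Your version buys a cleaner, case-free treatment of the exponent $k$ at the modest price of introducing the intermediate homogeneous function $h_{J,k,l}$; the paper's version is marginally more direct in that it never names that auxiliary object.
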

\begin{proof}
We first establish the following bound, for $n\in\mathbb{Z}$:
\begin{equation}\label{eq 2' lem 1 s/t}
\big|\del^IL^J\big((s/t)^n\big)\big|\leq \left\{
\aligned
&C(s/t)^n, \quad &&|I|=0,
\\
&C(s/t)^n(t/s^2),\quad &&|I|\geq 0.
\endaligned
\right.
\end{equation}
When $n\in \mathbb{N}$, this is based on \eqref{eq 2 lem 1 s/t}. By Leibniz rule,
$$
\del^IL^J((s/t)^n) = \sum_{I_1+I_2+\cdots I_n=I\atop J_1+J_2+\cdots J_n=J}\del^{I_1}L^{J_1}(s/t)\cdot \del^{I_2}L^{J_2}(s/t)\cdots \del^{I_n}L^{J_n}(s/t).
$$
Remark that when $|I|\geq 1$, there are at least one $|I_j|\geq 1$.

Then consider $(s/t)^{-n}$. This is also by Fa\`a di Bruno's formula. We denote by $u = (s/t)$ and
$$
\aligned
f: \RR^+ &\rightarrow \RR
\\
 x&\rightarrow x^{-n}
\endaligned
$$
We denote by $Z^{I'} = \del^IL^J$. Then $Z^{I'}$ is of type $(j,i,0,0)$ with $i=|I|$ and $j = |J|$. Then
$$
\del^IL^J\big((s/t)^{-n}\big) = Z^{I'}(f(u)) =
\sum_{k=1}^{|I|+|J|}\sum_{I'_1+\cdots I'_k \seq I'}f^{(k)}(u)\cdot Z^{I_1'}(s/t)\cdots Z^{I_k'}(s/t).
$$
Here
$$
Z^{I'_l} = \del^{I_l}L^{J_l},\quad 1\leq l\leq k.
$$
Then by \eqref{eq 2 lem 1 s/t}: suppose that among $\{I_1,I_2\cdots I_k\}$ there are $i_0$ indices of order positive. Then when $i\geq 1$, there are  at least one index with order $\geq 1$. Then
$$
\big|f^{(k)}(u)\cdot\del^{I_1}L^{J_1}(s/t)\cdots \del^{I_k}L^{J_k}(s/t)\big|\leq C_n(s/t)^{-n-k}\cdot (s/t)^{k-i_0}s^{-i_0} = C(s/t)^{-n-i_0}s^{-i_0}.
$$
Recall that $s^{-1}\leq s/t$, then the bound on $\del^IL^J\big((s/t)^{-n}\big)$ is established.

Now for \eqref{eq 1 lem 3 s/t}, remark that
$$
\del^IL^J\big((s/t)^kt^l\big) = \sum_{I_1+I_2=I\atop J_1+J_2=J}\del^{I_1}L^{J_1}(s/t)^k\cdot \del^{I_2}L^{J_2}t^l.
$$
Then apply \eqref{eq 2' lem 1 s/t} and the homogeneity of $t^l$, the desired result is established.
\end{proof}

\begin{remark}
We list out some special cases of \eqref{eq 1 lem 3 s/t}:
\begin{equation}\label{eq 2 lem 2 s/t}
\big|\del^IL^J(s^n)\big|\leq
\left\{
\aligned
&Cs^n,\quad |I|=0,
\\
&Cs^n(t/s^2),\quad |I|\geq 1,
\endaligned
\right.
\quad
\big|\del^IL^J(s^{-n})\big|\leq
\left\{
\aligned
&Cs^{-n},\quad |I|=0,
\\
&Cs^{-n}(t/s^2),\quad |I|\geq 1.
\endaligned
\right.
\end{equation}
\end{remark}

\begin{lemma}\label{lem 1 (t-r)/t}
The following bounds hold with a constant $C$ determined by $I,J$:
in the region $\Kcal\cap\{t/2<|x|<t\}$,
\begin{equation}\label{eq 1 lem 1 (t-r)/t}
\big|\del^IL^J(1-r/t)\big|\leq \left\{
\aligned
&C(s/t)^2,\quad &&|I|=0,
\\
&Ct^{-|I|},\quad &&|I|>0.
\endaligned
\right.
\end{equation}
\end{lemma}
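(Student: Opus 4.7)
The plan is to factor $1 - r/t$ so as to expose the $(s/t)^2$ decay explicitly, then treat the two cases separately using the tools already established. The key identity is
\begin{equation*}
1 - \frac{r}{t} = \frac{(t-r)(t+r)}{t(t+r)} = \Big(\frac{s}{t}\Big)^2 \omega, \qquad \omega := \frac{1}{1+r/t},
\end{equation*}
and the observation that on $\{t/2 < r < t\}$ we have $r/t \in (1/2,1)$, so $\omega$ is smooth and uniformly bounded above and away from zero.

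For $|I|=0$, I apply the Leibniz rule (Lemma \ref{lem 1 Leinbiz}) to expand $L^J((s/t)^2 \omega)$ into a sum of products $L^{J_1}((s/t)^2) \cdot L^{J_2}(\omega)$. The first factor is bounded by $C(s/t)^2$ by combining the first decomposition in Lemma \ref{lem 1 s/t} (so that $L^J(s/t) = \Lambda^J(s/t)$ with $\Lambda^J$ homogeneous of degree $0$) with one more Leibniz expansion. The second factor is bounded by a constant via the F\`aa di Bruno formula (Lemma \ref{lem 1 faa}) applied with $f(y) = 1/(1+y)$ and $u = r/t$: each resulting $L$-derivative of $r/t$ is a derivative of a (locally) homogeneous function of degree $0$ and hence bounded on the region, while the prefactors $f^{(k)}(r/t)$ are bounded since $r/t$ stays away from $-1$. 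Assembling these gives $|L^J(1-r/t)| \leq C(s/t)^2$.

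For $|I| \geq 1$, I use that $\del^I$ annihilates the constant, so $\del^I L^J(1-r/t) = -\del^I L^J(r/t)$. On $\Kcal \cap \{r>0\}$ the function $r/t$ is $C^\infty$ and invariant under the dilation $(t,x) \mapsto (\lambda t, \lambda x)$, and repeating the scaling argument from the proof of Proposition \ref{prop 1 homo} (with $\lambda = 1/t$) yields
\begin{equation*}
(\del^I L^J(r/t))(t,x) = t^{-|I|}\,(\del^I L^J(r/t))(1, x/t).
\end{equation*}
Since $|x/t| \in (1/2,1)$ in our region, the right-hand factor is continuous on the compact annulus $\{1/2 \leq |y| \leq 1\}$ and thus uniformly bounded. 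The main subtlety is that $r/t$ fails to be $C^\infty$ at the spatial origin and therefore does not literally fit the global definition of a homogeneous function used in the paper; this is circumvented precisely by the restriction $\{t/2 < |x| < t\}$, which keeps the scaling argument entirely inside a conic region disjoint from the singular axis $\{x = 0\}$.
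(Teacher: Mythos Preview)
Your proof is correct but follows a genuinely different strategy from the paper's. The paper works throughout with the globally smooth function $(r/t)^2 = \sum_a (x^a/t)^2$: it first checks $L_a\big((r/t)^2\big) = (2x^a/t)(s/t)^2$, inductively obtains $L^J\big((r/t)^2\big) = \Theta^J(s/t)^2$ for $|J|\geq 1$, and then recovers $r/t$ from $(r/t)^2$ via a single F\`aa di Bruno argument with $f(x)=\sqrt{x}$, treating the cases $|I|=0$ and $|I|\geq 1$ in one stroke. Your route is more direct on each branch: for $|I|=0$ you exhibit the $(s/t)^2$ factor from the outset through $1-r/t=(s/t)^2/(1+r/t)$ and only need to control the harmless factor $\omega$; for $|I|\geq 1$ you bypass F\`aa di Bruno entirely and appeal to pure scaling of $r/t$. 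The advantage of your approach is transparency---the decay mechanism is visible immediately---while the paper's approach has the advantage that $(r/t)^2$ fits the global definition of homogeneous function exactly, so Proposition~\ref{prop 1 homo} applies without the caveat about the singular axis $\{x=0\}$ that you (correctly) have to flag.
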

\begin{proof}
We first remark that
$$
L_a\big((r/t)^2\big) = \frac{2x^a}{t}(s/t)^2
$$
which leads to (by induction and \eqref{eq 1 lem 1 s/t}, we omit the detail)
\begin{equation}\label{eq 2 lem 1 (t-r)/t}
L^J ((r/t)^2) = \Theta^J(s/t)^2,\quad |J|\geq 1, \quad \Theta^J\text{ are homogeneous functions of degree zero.}
\end{equation}
Then by the homogeneity of $\Theta^J$ and $(s/t)^2$,
\begin{equation}\label{eq 3 lem 1 (t-r)/t}
\big|\del^IL^J\big((r/t)^2\big)\big|\leq
\left\{
\aligned
&C(s/t)^2,\quad &&|I|=0,|J|\geq 1,
\\
&Ct^{-|I|}\leq C(s/t)^2,\quad &&|I|\geq 1.
\endaligned
\right.
\end{equation}
Now we denote by $u = (r/t)^2$ which is homogeneous of degree zero, and
$$
\aligned
f:(0,1)&\rightarrow\RR^+
\\
x&\rightarrow \sqrt{x}.
\endaligned
$$
Then by denoting $Z^{I'} = \del^IL^J$, which is of type $(j,i,0,0)$ with $i=|I|$ and $j=|J|$. Then
$$
\del^IL^J(r/t) = Z^{I'}\big(f(u)\big) = \sum_{k=1}^{|I|+|J|}\sum_{I_1'+\cdots I_k'\seq I'}f^{(k)}(u)\cdot Z^{I_1'}u\cdots Z^{I_k'}u.
$$
Recall that
$$
f^{(k)}(u) = C_ku^{1/2-k} = C_k(r/t)^{1-2k}.
$$
Recalling that in the region $\{t/2<|x|<t\}$, $1/2<r/t<1$. Then  by \eqref{eq 3 lem 1 (t-r)/t},
$$
\big|\del^IL^J(r/t)\big| \leq  \left\{
\aligned
&C(s/t)^2,\quad &&|I|=0,
\\
&Ct^{-|I|},\quad &&|I|\geq 1.
\endaligned
\right.
$$
Finally, remark that $|1-(r/t)| \leq C (s/t)^2$. Then \eqref{eq 1 lem 1 (t-r)/t} is established.
\end{proof}

\subsection{Null conditions in SHF and HF}
Let $T$ and $Q$ be two$-$ and three$-$contravariant tensor fields respectively defined on $\Kcal$. Suppose that in $\Kcal$,
\begin{equation}\label{eq 1 null}
T^{\alpha\beta},\  Q^{\alpha\beta\gamma}\quad \text{are  constants}.
\end{equation}
Then the following bounds hold:
\begin{lemma}\label{lem 1 null}
For all $I,J$, in $\Kcal$ the following bounds hold
\begin{equation}\label{eq 1 lem 1 null}
\big|\del^IL^J\Qu^{\alpha\beta\gamma}\big| + \big|\del^IL^J \Tu^{\alpha\beta}\big|\leq Ct^{-|I|}
\end{equation}
and
\begin{equation}\label{eq 2 lem 1 null}
\big|(s/t)^k\del^IL^J\Qb^{\alpha\beta\gamma}\big| + \big|(s/t)^j\del^IL^J \Tb^{\alpha'\beta'}\big|\leq
\left\{
\aligned
&C,\quad |I|=0,
\\
&Ct/s^2,\quad |I|\geq 1.
\endaligned
\right.
\end{equation}
where $k,j$ are the number of $0$ contained in $(\alpha,\beta,\gamma)$ and $(\alpha',\beta')$ respectively.
\end{lemma}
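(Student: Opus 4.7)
The plan is to exploit the fact that, up to an explicit power of $(t/s)$ in the hyperbolic-frame case, the components of a constant tensor are homogeneous functions of degree zero, and then invoke Proposition \ref{prop 1 homo} (to handle the homogeneous factor) and Lemma \ref{lem 3 s/t} (to handle the $(t/s)^j$ factor). For the SHF bound I would observe that every entry of $\Psiu$ is either a constant or $-x^a/t$, hence homogeneous of degree zero on $\{t>|x|\}$. By the multiplicative part of Proposition \ref{prop 1 homo}, each monomial in
$$
\Tu^{\alpha\beta} = \Psiu^\alpha_{\alpha'}\Psiu^\beta_{\beta'} T^{\alpha'\beta'},\qquad \Qu^{\alpha\beta\gamma} = \Psiu^\alpha_{\alpha'}\Psiu^\beta_{\beta'}\Psiu^\gamma_{\gamma'} Q^{\alpha'\beta'\gamma'}
$$
is homogeneous of degree zero, so the same holds for $\Tu^{\alpha\beta}$ and $\Qu^{\alpha\beta\gamma}$. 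The bound \eqref{eq 1 lem 1 null} then follows immediately from the last bullet of Proposition \ref{prop 1 homo}.

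For the HF bound I would first analyze the entries of $\Psib$: a factor $1/s$ appears in $\Psib^\alpha_{\alpha'}$ precisely when $\alpha = 0$, with numerator $t$ if $\alpha'=0$ and $-x^{\alpha'}$ if $\alpha'\neq 0$; when $\alpha \neq 0$ the entry is either $0$ or the constant $\delta^\alpha_{\alpha'}$. Consequently every nonzero monomial in the expansion of $\Tb^{\alpha\beta}$ carries exactly $j$ factors of $1/s$, the remaining numerator being a homogeneous polynomial of degree $j$ in $(t,x)$. Writing $\Tb^{\alpha\beta} = s^{-j} P_{\alpha\beta}(t,x)$, the quantity
$$
A_{\alpha\beta} := (s/t)^j \Tb^{\alpha\beta} = t^{-j}P_{\alpha\beta}(t,x)
$$
is homogeneous of degree zero in the sense of Section 3.1 (smooth on $\{t>|x|\}$, with polynomial bounds at $t=1$); the analogous assertion holds for $B_{\alpha\beta\gamma} := (s/t)^k \Qb^{\alpha\beta\gamma}$.

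To conclude, I would rewrite $\Tb^{\alpha\beta} = (s/t)^{-j} A_{\alpha\beta}$ and expand by the Leibniz rule (Lemma \ref{lem 1 Leinbiz}):
$$
(s/t)^j\, \del^I L^J \Tb^{\alpha\beta} = \sum_{\substack{I_1+I_2 = I \\ J_1+J_2 = J}} \bigl[(s/t)^j \del^{I_1} L^{J_1}(s/t)^{-j}\bigr]\, \del^{I_2} L^{J_2} A_{\alpha\beta}.
$$
By Lemma \ref{lem 3 s/t} the bracketed factor is bounded by $C$ when $|I_1|=0$ and by $Ct/s^2$ when $|I_1|\geq 1$, while by Proposition \ref{prop 1 homo} one has $|\del^{I_2} L^{J_2} A_{\alpha\beta}| \leq C t^{-|I_2|}$. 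Combining these, and using that $t>1$ and $s\leq t$ in $\Kcal$ give $t^{-|I_2|}\leq 1$ always and $t^{-|I_2|}\leq C t/s^2$ whenever $|I_2|\geq 1$, delivers \eqref{eq 2 lem 1 null}; the case of $\Qb$ is identical with $k$ in place of $j$. I expect the only delicate step to be the exact factorization $\Tb^{\alpha\beta} = s^{-j} P_{\alpha\beta}(t,x)$, which demands tracking the contribution of every index in the bilinear (resp.\ trilinear) sum and confirming that the polynomial remainder has the correct degree; once this is secured, the remainder of the argument is a routine assembly of the Leibniz rule and the preceding lemmas.
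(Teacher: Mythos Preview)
Your proof is correct and follows essentially the same route as the paper: for the SHF bound you use that $\Psiu^\alpha_{\alpha'}$ are homogeneous of degree zero and invoke Proposition~\ref{prop 1 homo}, and for the HF bound you first show $(s/t)^j\Tb^{\alpha\beta}$ is homogeneous of degree zero, then write $\Tb^{\alpha\beta}=(s/t)^{-j}A_{\alpha\beta}$, expand by Leibniz, and combine Lemma~\ref{lem 3 s/t} with Proposition~\ref{prop 1 homo}---exactly as the paper does. The only cosmetic difference is that the paper obtains the homogeneity of $(s/t)^j\Tb^{\alpha\beta}$ directly from the observation that $(s/t)\Psib^0_{\alpha'}$ and $\Psib^a_{\alpha'}$ are each homogeneous of degree zero, whereas you go through the explicit factorization $\Tb^{\alpha\beta}=s^{-j}P_{\alpha\beta}(t,x)$; both amount to the same computation.
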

\begin{remark}
Let us explain the last phrase in the statement of the above lemma by examples. For $\Qb^{001}$, $k=2$ and for $\Tb^{11}$, $j=0$.
\end{remark}
\begin{proof}
The proof is a direct calculation. Recall that
$$
\Tu^{\alpha\beta} = \Psiu^{\alpha}_{\alpha'}\Psiu^{\beta}_{\beta'}T^{\alpha'\beta'},\quad
\Tb^{\alpha\beta} = \Psib^{\alpha}_{\alpha'}\Psib^{\beta}_{\beta'}T^{\alpha'\beta'}.
$$

Then we make the following observation. $\Psiu^{\alpha}_{\alpha'}$ are homogeneous of degree zero. Thus
$$
\del^IL^J\Tu^{\alpha\beta} = \del^IL^J\big(\Psiu^{\alpha}_{\alpha'}\Psiu^{\beta}_{\beta'}T^{\alpha'\beta'}\big)
$$
is homogeneous of degree $-|I|$. Then the desired bound is established. The bounds on $\Qb^{\alpha\beta\gamma}$ is proved in the same manner. We omit the detail.

For the bounds on $\Tb^{\alpha\beta}$, we make the following observation.
$$
(s/t)\Psib_{\alpha}^0, \quad \Psib_{\alpha}^a
$$
are homogeneous of degree zero. Thus
$$
(s/t)^k\Tb^{\alpha\beta} = (s/t)^k\Psib_{\alpha'}^{\alpha}\Psib_{\beta'}^{\beta}T^{\alpha'\beta'}
$$
is homogeneous of degree zero. We denote by
$$
f := (s/t)^k\Tb^{\alpha\beta}
$$
Then
$$
(s/t)^k\del^IL^J(\Tb^{\alpha\beta}) = (s/t)^k\del^IL^J\big((s/t)^{-k}f\big) = (s/t)^k\sum_{I_1+I_2=I\atop J_1+J_2=J}\del^{I_1}L^{J_1}(s/t)^{-k}\cdot \del^{I_2}L^{J_2}f.
$$
Then by \eqref{eq 2' lem 1 s/t} and the homogeneity of $f$, the desired bound on $\Tb^{\alpha\beta}$ is established.

The bound on $\Qb^{\alpha\beta\gamma}$ is established in the same manner, we omit the detail.
\end{proof}

Then we recall the notion of null condition. $T$ and $Q$ are called null forms, if
$$
T^{\alpha\beta}\xi_{\alpha}\xi_{\beta} = Q^{\alpha\beta\gamma}\xi_{\alpha}\xi_{\beta}\xi_{\gamma} = 0,\quad \forall \xi\in\RR^{n+1}, \quad \xi_0^2-\sum_{a=1}^n\xi_a^2 = 0.
$$
The following bounds on null forms  are established and applied in diverse of context, see in detail \cite{LeFloch-MA-book-2015,Lindblad2008,LM2,LM3,MA2017-1,MA2017-2,MA2017-3}.
\begin{proposition}[Null condition in SHF and HF]\label{prop 1 null}
Let $T$, $Q$ be null forms of two and three contravariant type respectively. Suppose that in $\Kcal$,
$$
T^{\alpha\beta}, Q^{\alpha\beta\gamma},\quad \text{are constants}.
$$
Then
\begin{equation}\label{eq-SHF null}
|\del^IL^J\Qu^{000}| + |\del^IL^J \Tu^{00}|
\leq
\left\{
\aligned
&C(s/t)^2,\quad &&|I|=0,
\\
&Ct^{-|I|},\quad&&|I|>0.
\endaligned
\right.
\end{equation}
and
\begin{equation}\label{eq-HF null}
|(s/t)\del^IL^J\Qb^{000}|+|\del^IL^J\Tb^{00}|\leq
\left\{
\aligned
&C,\quad &&|I|=0,
\\
&Ct/s^2,\quad &&|I|>0.
\endaligned
\right.
\end{equation}
\end{proposition}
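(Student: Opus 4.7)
The plan is to convert the null hypothesis into a closed-form algebraic identity for each of the four contracted symbols $\Tu^{00}$, $\Qu^{000}$, $\Tb^{00}$, $\Qb^{000}$, and then let the Leibniz rule \eqref{eq 2 Leibniz} together with the homogeneity bounds of Proposition~\ref{prop 1 homo} and Lemma~\ref{lem 3 s/t} do the rest. No fresh commutator or integration argument is needed.

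First I would recall the algebraic characterization of constant-coefficient null forms: the quadratic polynomial $T^{(\alpha\beta)}\xi_\alpha\xi_\beta$ in $\xi\in\RR^{n+1}$ vanishes on the Minkowski null cone $\{m^{\alpha\beta}\xi_\alpha\xi_\beta=0\}$, hence equals $c\,m^{\alpha\beta}\xi_\alpha\xi_\beta$ for some constant $c$; likewise $Q^{(\alpha\beta\gamma)}\xi_\alpha\xi_\beta\xi_\gamma=(a^\mu\xi_\mu)\,m^{\alpha\beta}\xi_\alpha\xi_\beta$ with constant coefficients $a^\mu$. Next I would specialize $\xi$ to the first-column covectors of the two frames. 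Setting $\ell_\alpha:=\Psiu^0_\alpha=(1,-x/t)$, one has $m^{\alpha\beta}\ell_\alpha\ell_\beta=1-|x|^2/t^2=(s/t)^2$, whence
$$\Tu^{00}=c\,(s/t)^2,\qquad \Qu^{000}=\bigl(a^0-a^b x^b/t\bigr)(s/t)^2.$$
Setting $\ell'_\alpha:=\Psib^0_\alpha=(t/s,-x/s)$, one has $m^{\alpha\beta}\ell'_\alpha\ell'_\beta=(t^2-|x|^2)/s^2=1$, so
$$\Tb^{00}=c,\qquad \Qb^{000}=(a^0 t-a^b x^b)/s.$$

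With these closed-form identities at hand, the estimates reduce to bookkeeping. The factors $(s/t)^2$ and $a^0-a^b x^b/t$ appearing in the SHF formulas are smooth on $\Kcal$ and homogeneous of degree $0$, so Lemma~\ref{lem 3 s/t} (applied with $k=2,l=0$) gives the sharp $|I|=0$ bound $|L^J(s/t)^2|\leq C(s/t)^2$ via \eqref{eq 1 lem 3 s/t}, while Proposition~\ref{prop 1 homo} supplies the $|I|\geq 1$ bound $\leq Ct^{-|I|}$ through \eqref{eq 1 homo}; the Leibniz rule \eqref{eq 2 Leibniz} then combines these into \eqref{eq-SHF null}. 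In HF the estimate for $\Tb^{00}$ is immediate because it is a constant; for $\Qb^{000}$ the numerator $a^0 t-a^b x^b$ is homogeneous of degree $1$, so its $\del^IL^J$-derivatives are controlled by \eqref{eq 1 homo}, the derivatives of $s^{-1}$ are controlled by \eqref{eq 2 lem 2 s/t}, and combining these through \eqref{eq 2 Leibniz} recovers \eqref{eq-HF null}.

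The main obstacle is really just the algebraic reduction to $m^{\alpha\beta}\xi_\alpha\xi_\beta$-multiples in the first step: for $n\geq 2$ it follows from the irreducibility of $m^{\alpha\beta}\xi_\alpha\xi_\beta$ as a polynomial in $\xi$ (a one-line Nullstellensatz argument), whereas the borderline case $n=1$ requires a short direct verification by equating coefficients after imposing $\xi_0=\pm\xi_1$. Once that structural identity is secured, the rest of the proof is pure substitution and invocation of the estimates already established above.
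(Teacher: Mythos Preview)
Your argument is correct, but it follows a genuinely different route from the paper's. The paper does \emph{not} invoke the algebraic characterization of constant null forms. Instead it introduces the auxiliary null covector $\xi=(r/t,x^a/t)$, uses $T^{\alpha\beta}\xi_\alpha\xi_\beta=0$ pointwise, and subtracts to obtain
\[
\Tu^{00}=\Psiu^0_\alpha(\Psiu^0_\beta-\xi_\beta)T^{\alpha\beta}+(\Psiu^0_\alpha-\xi_\alpha)\xi_\beta T^{\alpha\beta}=(1-r/t)\,f
\]
with $f$ homogeneous of degree zero; the bound \eqref{eq-SHF null} then comes from Lemma~\ref{lem 1 (t-r)/t} on $1-r/t$ (which forces a split into the regions $\{r>t/2\}$ and $\{r\le t/2\}$), and \eqref{eq-HF null} follows from the relation $\Tb^{00}=(t/s)^2\Tu^{00}$ together with \eqref{eq 2' lem 1 s/t}. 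Your approach, by contrast, identifies the symmetric part of $T$ as a constant multiple of $m$ and reads off $\Tu^{00}=c(s/t)^2$, $\Tb^{00}=c$ directly; this is cleaner, avoids the region split and the $(1-r/t)$ lemma entirely, and makes the HF case trivial. The trade-off is that the paper's subtraction trick is purely pointwise and would carry over verbatim to null forms with variable (say, homogeneous) coefficients, whereas your Nullstellensatz step genuinely uses that $T^{\alpha\beta}$ and $Q^{\alpha\beta\gamma}$ are constant.
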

\begin{proof}
We remark the following identity:
$$
\Tu^{00} = \Psiu_{\alpha}^0\Psiu_{\beta}^0T^{\alpha\beta}.
$$
We denote by $\xi = (r/t,x^a/t)^T$. Remark that
$$
\xi_0^2 - \sum_a\xi_a^2 = 0.
$$
Then by null condition
$$
T^{\alpha\beta}\xi_{\alpha}\xi_{\beta} = 0.
$$
So
$$
\aligned
\Tu^{00} =& \Psiu_{\alpha}^0\Psiu_{\beta}^0T^{\alpha\beta} - \Psiu_{\alpha}^0\xi_{\beta}T^{\alpha\beta} + \Psiu_{\alpha}^0 \xi_{\beta}T^{\alpha\beta} - \xi_{\alpha}\xi_{\beta}T^{\alpha\beta}
\\
=& \Psiu_{\alpha}^0\big(\Psiu_{\beta}^0-\xi_{\beta}\big)T^{\alpha\beta} + (\Psiu_{\alpha}^0 - \xi_{\alpha})\xi_{\beta}T^{\alpha\beta}.
\endaligned
$$
Recall that
$$
\Psiu_{\alpha}^0 - \xi_{\alpha} = \left\{
\aligned
&1-r/t,\quad &&\alpha = 0,
\\
&0,\quad &&\alpha>0.
\endaligned
\right.
$$
Then
$$
\Tu^{00}  = (1-r/t)f
$$
with $f$ a homogeneous function. Then
$$
\del^IL^J (\Tu^{00}) = \sum_{I_1+I_2=I\atop J_1+J_2=J}\del^{I_1}L^{J_1}(1-r/t)\cdot \del^{I_2}L^{J_2}f
$$
Then by \eqref{eq 1 lem 1 (t-r)/t}, the bound on $\Tu^{00}$ is established in  $\{t/2<r<t\}$. By \eqref{eq 1 lem 1 null} and the fact that $\sqrt{3}/2\leq s/t\leq 1$ for $\{0\leq r\leq t/2\}$, this bound holds in $\Kcal$. 

The bound on $\Qu^{000}$ is established in the same way and we omit the detail.

For the bounds on the components in HF. We remark that
$$
\Tb^{00} = \Psib_{\alpha}^0\Psib_{\beta}^0T^{\alpha\beta} = (t/s)^2\Psiu_{\alpha}^0\Psiu_{\beta}^0T^{\alpha\beta} = (t/s)^2\Tu^{00}.
$$
Then
$$
\del^IL^J \Tb^{00} = \del^IL^J\big((t/s)^2\Tu^{00}\big) = \sum_{I_1+I_2=I\atop J_1+J_2=J}\del^{I_1}L^{J_1}(t/s)^2\cdot \del^{I_2}L^{J_2}\Tu^{00}
$$
Then by \eqref{eq-SHF null} and \eqref{eq 2' lem 1 s/t}, the bound on $\Tb^{00}$ is established.

The bound on $\Qb^{000}$ can be proved in the same manner, we omit the detail.
\end{proof}

\section{Decomposition of commutators}
\subsection{Basic decomposition}
We recall the following basic relations of commutation, established in \cite{LeFloch-MA-book-2015}:
\begin{lemma}\label{lem 1 decompo commu}
Let $u$ be a function defined in $\Kcal$, sufficiently regular. Let $(I,J)$ be a pair of multi-indices, then the following relations hold:
\begin{equation}\label{eq 1 comm}
[L^J,\del_{\alpha}] = \sum_{\beta,|J'|<|J|}\Gamma_{\alpha J'}^{J\,\beta}\del_{\beta}L^{J'},
\end{equation}
\begin{equation}\label{eq 2 comm}
[L^J,\del^I] = \sum_{|I'|=|I|\atop|J'|<|J|}\Gamma_{I'J'}^{JI}\del^{I'}L^{J'}
\end{equation}
where $\Gamma_{\alpha J'}^{J\,\beta}$ and $\Gamma_{I'J'}^{JI}$ are constants.
\end{lemma}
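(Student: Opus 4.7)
The plan is a nested induction: first establish \eqref{eq 1 comm} by induction on $|J|$, then use it to get \eqref{eq 2 comm} by induction on $|I|$.

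For the base case of \eqref{eq 1 comm}, I compute $[L_a,\del_\beta]$ directly from $L_a = x^a\del_t + t\del_a$, using $\del_\beta x^a = \delta_\beta^a$ and $\del_\beta t = \delta_\beta^0$, together with the commutativity of the $\del_\gamma$. The result is the constant-coefficient formula
\[
[L_a,\del_\beta] = -\delta_\beta^a\,\del_0 - \delta_\beta^0\,\del_a,
\]
which is of the announced form. For the inductive step, I factor $L^J = L_{j_1}L^{J''}$ with $|J''|=|J|-1$ and apply the derivation identity
\[
[L_{j_1}L^{J''},\del_\alpha] = L_{j_1}[L^{J''},\del_\alpha] + [L_{j_1},\del_\alpha]L^{J''}.
\]
The second term is immediately of the desired form by the base case (it yields $\del_\beta L^{J''}$ with $|J''|<|J|$). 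For the first term, the inductive hypothesis expands $[L^{J''},\del_\alpha]$ as $\sum\Gamma\,\del_\beta L^{J'''}$ with $|J'''|<|J''|$; then I must commute one more boost past $\del_\beta$ by writing $L_{j_1}\del_\beta = \del_\beta L_{j_1} + [L_{j_1},\del_\beta]$ and invoking the base case again. Collecting the pieces and absorbing everything into new constants $\Gamma^{J\,\beta}_{\alpha J'}$, one sees that each surviving term carries at most $|J|-1$ boosts, as required.

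For \eqref{eq 2 comm}, I keep $|J|$ arbitrary and induct on $|I|$. The case $|I|=1$ is exactly \eqref{eq 1 comm}. Assuming the result for multi-indices of order $|I|$, write a higher-order partial as $\del^{I''}\del_\alpha$ with $|I''|=|I|$ and use
\[
[L^J,\del^{I''}\del_\alpha] = [L^J,\del^{I''}]\,\del_\alpha + \del^{I''}\,[L^J,\del_\alpha].
\]
Both summands are handled identically in spirit: the inductive hypothesis (resp.\ \eqref{eq 1 comm}) replaces the inner commutator by a sum of $\del^{I'}L^{J'}$ with $|J'|<|J|$, after which one still has to move a single $\del_\alpha$ (or $\del^{I''}$) past a power of $L^{J'}$. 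The only non-trivial reshuffling needed is $\del^{I'}L^{J'}\del_\alpha = \del^{I'}\del_\alpha L^{J'} + \del^{I'}[L^{J'},\del_\alpha]$, and the bracket on the right is again covered by \eqref{eq 1 comm} (with fewer boosts, so no circularity). At every step the total number of $\del$ factors is exactly preserved at $|I|+1$, and the number of boosts stays strictly below $|J|$.

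The only real obstacle is bookkeeping: one must verify at each reshuffling that (i) the $\del$-order is preserved, (ii) the boost count stays $<|J|$, and (iii) the coefficients produced are genuinely constants (not functions of $(t,x)$), which ultimately comes from the fact that $\del_\beta x^a$ and $\del_\beta t$ are constants and the $\del_\gamma$ mutually commute. No analysis is needed; the proof is entirely algebraic.
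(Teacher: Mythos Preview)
Your proof is correct and follows essentially the same route as the paper: induction on $|J|$ for \eqref{eq 1 comm} via the derivation identity $[AB,C]=A[B,C]+[A,C]B$ together with the base commutators $[L_a,\del_\alpha]=\gamma_{a\alpha}^\beta\del_\beta$, and then induction on $|I|$ for \eqref{eq 2 comm} using \eqref{eq 1 comm} to absorb the residual single-partial-past-boosts corrections. The only cosmetic difference is that you peel off the extra $\del_\alpha$ on the right of $\del^{I''}\del_\alpha$ whereas the paper peels it off on the left, which changes nothing in the bookkeeping.
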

\begin{proof}
This is firstly proved in \cite{LeFloch-MA-book-2015}. Here we give a more detailed proof.

Firstly,
$$
[L_a,\del_t] = -\del_a,\quad [L_a,\del_b] = -\delta_{ab}\del_t.
$$
Then we denote by
\begin{equation}\label{eq 1 pr lem 1 decompo commu}
[L_a,\del_{\alpha}] = \gamma_{a\alpha}^{\beta}\del_{\beta}.
\end{equation}
with $\gamma_{\alpha a}^{\beta}$ constants.

Then \eqref{eq 1 comm} is by induction on $|J|$. The case $|J|=1$ is guaranteed by \eqref{eq 1 pr lem 1 decompo commu}. For $|J|\geq 1$, remark the following calculation (by applying the assumption of induction):
$$
\aligned
\,[L_aL^J,\del_{\alpha}] =& L_a\big([L^J,\del_{\alpha}]\big) + [L_a,\del_{\alpha}]L^J
\\
=&\sum_{|J'|<|J|}\Gamma_{\alpha J'}^{J\beta}L_a\del_{\beta}L^{J'} + \gamma_{a\alpha}^{\beta}\del_{\beta}L^J
\\
=&\sum_{|J'|<|J|}\Gamma_{\alpha J'}^{J\beta}\del_{\beta}L_a L^{J'} + \sum_{|J'|<|J|}\Gamma_{\alpha J'}^{J\beta}[L_a,\del_{\beta}]L^{J'}
+ \gamma_{a\alpha}^{\beta}\del_{\beta}L^J
\\
=&\sum_{|J'|<|J|}\Gamma_{\alpha J'}^{J\beta}\del_{\beta}L_a L^{J'} + \sum_{|J'|<|J|}\Gamma_{\alpha J'}^{J\beta}\gamma_{a\beta}^{\beta'}\del_{\beta'}L^{J'}
+ \gamma_{a\alpha}^{\beta}\del_{\beta}L^J
\endaligned
$$
and this guarantees the case with $|J''|=|J|+1$.

\eqref{eq 2 comm} is by induction on $|I|$. The case $|I|=1$ is guaranteed by \eqref{eq 1 comm}. Then we remark the following calculation (with assumption of induction):
$$
\aligned
\,[L^J,\del_{\alpha}\del^I] =&  [L^J,\del_{\alpha}]\del^I + \del_{\alpha}\big([L^J,\del^I]\big)
\\
=&\sum_{|J'|<|J|}\Gamma_{\alpha J'}^{J\beta}\del_{\beta}L^{J'}\del^I + \sum_{|I'|=|I|\atop |J'|<|J|}\Gamma_{I'J'}^{JI}\del_{\alpha}\del^{I'}L^{J'}
\\
=&\sum_{|J'|<|J|}\Gamma_{\alpha J'}^{J\beta}\del_{\beta}\del^I L^{J'} + \sum_{ |J'|<|J|}\Gamma_{\alpha J'}^{J\beta}\del_{\beta}[L^{J'},\del^I]
+ \sum_{|I'|=|I|\atop |J'|<|J|}\Gamma_{I'J'}^{JI}\del_{\alpha}\del^{I'}L^{J'}
\\
=&\sum_{|J'|<|J|}\Gamma_{\alpha J'}^{J\beta}\del_{\beta}\del^I L^{J'} + \sum_{|J'|<|J|,|I'|=|I|\atop|J''|<|J'|}\Gamma_{\alpha J'}^{J\beta}\Gamma^{J'I}_{I'J''}\del_{\beta}\del^{I'}L^{J''}
+ \sum_{|I'|=|I|\atop |J'|<|J|}\Gamma_{I'J'}^{JI}\del_{\alpha}\del^{I'}L^{J'}
\endaligned
$$
and by induction \eqref{eq 2 comm} is concluded.
\end{proof}

\subsection{Decomposition of high-order derivatives}


Before prove this we first establish a special case:
\begin{lemma}\label{lem 1 high-order}
Let $u$ be a function defined in $\Kcal_{[s_0,s_1]}$, sufficiently regular. Let $Z^K$ be a $N-$order operator of type $(j,i,0,0)$. Then the following bound holds:
\begin{equation}\label{eq 1 lem 1 high-order}
Z^Ku = \sum_{|I|=i\atop|J|\leq j}\Theta^K_{IJ}\del^IL^Ju
\end{equation}
with $\Theta_{IJ}^K$ constants determined by $K$ and $I,J$.
\end{lemma}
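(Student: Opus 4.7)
\medskip

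\noindent\textbf{Proof plan for Lemma \ref{lem 1 high-order}.}

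The plan is to argue by induction on the order $N=|K|=i+j$. The base cases $N=0$ and $N=1$ are immediate: either $Z^K$ is the identity, or it is a single $\del_\alpha$ (type $(0,1,0,0)$) or a single $L_a$ (type $(1,0,0,0)$), each of which is already in the normal form $\del^IL^J$ with $|I|=i$ and $|J|\le j$.

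For the inductive step, write $Z^K=Z_{k_1}Z^{K'}$ where $Z^{K'}$ has order $N-1$ and is still of ``partial$+$boost only'' type; denote its type by $(j',i',0,0)$. By the induction hypothesis,
$$
Z^{K'}u=\sum_{|I|=i',\,|J|\le j'}\Theta^{K'}_{IJ}\,\del^I L^J u.
$$
I would then split into two cases according to the leftmost factor $Z_{k_1}$. If $Z_{k_1}=\del_\alpha$, then $(j,i)=(j',i'+1)$ and one simply applies $\del_\alpha$ termwise: each $\del_\alpha\del^IL^Ju$ is already a $\del^{I''}L^Ju$ with $|I''|=i'+1=i$ and $|J|\le j'=j$, giving the required form. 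If $Z_{k_1}=L_a$, then $(j,i)=(j'+1,i')$ and one needs to push $L_a$ through $\del^I$. Here Lemma \ref{lem 1 decompo commu} (applied with $|J|=1$, i.e.\ to $[L_a,\del^I]$) yields
$$
L_a\del^IL^Ju=\del^I(L_aL^J)u+\sum_{|I'|=|I|}\Gamma^{\,aI}_{I'}\,\del^{I'}L^Ju.
$$
The first term is of the form $\del^IL^{J''}u$ with $|J''|=|J|+1\le j'+1=j$, and the second contributes terms $\del^{I'}L^Ju$ with $|I'|=i'=i$ and $|J|\le j'<j$. In both subcases every resulting term lies in the claimed range $\{|I|=i,\ |J|\le j\}$, which closes the induction.

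The only slightly delicate point is bookkeeping in the boost case: one must check that after using \eqref{eq 2 comm} the emerging terms still respect $|I'|=i$ exactly (not just $\le i$) and $|J|\le j$. This is where the precise form of \eqref{eq 2 comm}, which preserves the order $|I'|=|I|$ and strictly decreases the boost count, is essential; without the strict decrease $|J'|<|J|$ in the commutator, the induction on $N$ would not close. No homogeneity estimates or geometric facts are needed—this is a purely algebraic rearrangement—so I do not expect any genuine obstacle beyond carefully tracking the multi-index sizes through the two cases.
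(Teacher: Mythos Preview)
Your argument is correct and rests on the same ingredient as the paper's proof, namely the commutator identity \eqref{eq 2 comm} from Lemma~\ref{lem 1 decompo commu}, used to push boosts to the right of partials while preserving $|I'|=|I|$ and strictly decreasing the boost count. The only difference is the induction parameter: you induct on the total order $N=|K|$ and peel off the leftmost factor, whereas the paper writes $Z^K=\del^{I_0}L^{J_0}\del^{I_1}L^{J_1}\cdots\del^{I_r}L^{J_r}$ in alternating blocks and inducts on the number $r$ of block boundaries, commuting a whole $[L^{J_0},\del^{I_1}]$ at each step; your scheme is slightly more streamlined, but the two close in exactly the same way.
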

\begin{proof}[Proof of lemma \ref{lem 1 high-order}]
In this case, $Z^K$ can be written as
$$
Z^K = \del^{I_0}L^{J_0}\del^{I_1}L^{J_1}\cdots \del^{I_r}L^{J_r}.
$$
where $I_k$ and $J_k$ are multi-indices with components taking value in $\{n+1,n+2,\cdots 2n+1\}$ and $\{1,2,\cdots, n\}$ respectively. $I_0$ and $J_r$ may be empty indices (i.e. $L^{J_r}$ and $\del^{I_1}$ may be equal to $1$).

The proof is an induction on $r$. When $r=0$, that is,
$$
Z^K = \del^IL^J
$$
So \eqref{eq 1 lem 1 high-order} is trivial. Now suppose that for $r\leq r_0$ \eqref{eq 1 lem 1 high-order} holds, then we consider $r=r_0+1$:
\begin{equation}\label{eq 1 pr lem 1 high-order}
\aligned
Z^Ku =& \del^{I_0}L^{J_0}\del^{I_1}L^{J_1}\cdots \del^{I_r}L^{J_r}u
\\
=& \del^{I_0}\del^{I_1}L^{J_0}L^{J_1}\cdots \del^{I_r}L^{J_r}u + \del^{I_0}[L^{J_0},\del^{I_1}]\big(Z^{K''}u\big)
\endaligned
\end{equation}
where
$$
Z^{K''}u = L^{J_1}\del^{I_2}L^{J_2}\cdots \del^{I_{r}}L^{J_r}
$$
Denote by
$$
Z^{K'} = \del^{I_0}\del^{I_1}L^{J_0}L^{J_1}\cdots \del^{I_{r_0}}L^{J_{r_0}} = \del^{I'_0}L^{J'_0}\del^{I_2}L^{J_2}\cdots \del^{I_r}L^{J_r},
$$
so by the assumption of induction (remark that $Z^{K'}$ is also of type $(j,i,0,0)$),
$$
Z^{K'}u = \sum_{|I| = i\atop |J|\leq j}\Theta_{IJ}^{K'}\del^IL^Ju.
$$
For the second term in right-hand-side of \eqref{eq 1 pr lem 1 high-order}, we apply lemma \ref{lem 1 decompo commu}
$$
\aligned
\del^{I_0}[L^{J_0},\del^{I_1}]\big(Z^{K'}u\big)
=& \sum_{|I_1'|=|I_1|\atop |J_0'|<|J_0|}\del^{I_0}\Big(\Gamma_{J_0I_1}^{I_1'J_0'}\del^{I_1'}L^{J_0'}\big(Z^{K''}u\big)\Big)
\\
=&\sum_{|I_1'|=|I_1|\atop |J_0'|<|J_0|}\Gamma_{J_0I_1}^{I_1'J_0'}\big(\del^{I_0}\del^{I_1'}L^{J_0'}L^{J_1}\del^{I_2}L^{J_2}\cdots \del^{I_r}L^{J_r}u\big)
\\
=:&\sum_{|I_1'|=|I_1|\atop |J_0'|<|J_0|}\Gamma_{J_0I_1}^{I_1'J_0'}Z^{K'''}u.
\endaligned
$$
Then, apply the assumption of induction on $Z^{K'''}$ (remark that $Z^{K'''}$ is of type $(j',i,0,0)$ with $j'<j$)
$$
\del^{I_0}[L^{J_0},\del^{I_1}]\big(Z^{K'}u\big) = \sum_{|I_1'|=|I_1|\atop |J_0'|<|J_0|}\Gamma_{J_0I_1}^{I_1'J_0'}\sum_{|I|=i\atop |J|\leq j'<j }\Theta_{IJ}^{K'''}\del^IL^Ju.
$$
Recall that in the right-hand-side, the coefficients are constants, so \eqref{eq 1 lem 1 high-order} is established by induction.
\end{proof}

\begin{lemma}\label{lem 2 high-order}
Let $u$ be a function defined in $\Kcal_{[s_0,s_1]}$, sufficiently regular. Let $Z^K$ be a $N-$order operator of type $(j,i,0,l)$ and $N\geq 1$. Then the following bound holds:
\begin{equation}\label{eq 1 lem 2 high-order}
Z^Ku = \sum_{|I|\leq i, |J|\leq j+l\atop |I|+|J|\geq 1}t^{-l-i+|I|}\Delta_{IJ}^K\del^IL^Ju
\end{equation}
with $\Delta_{IJ}^K$ homogeneous functions of degree zero.
\end{lemma}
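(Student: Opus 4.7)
The plan is to prove the identity by induction on the order $N=|K|$, reducing everything to Lemma \ref{lem 1 high-order} (which handles the case $l=0$) by systematically replacing each hyperbolic derivative $\delu_a$ with $t^{-1}L_a$. This replacement is exact: from the definition $\delu_a=(x^a/t)\del_t+\del_a = t^{-1}(x^a\del_t + t\del_a) = t^{-1}L_a$, so each occurrence of a hyperbolic derivative contributes exactly one factor of $t^{-1}$, accounting precisely for the $t^{-l}$ part of the global weight $t^{-l-i+|I|}$.

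Concretely, I would decompose $Z^K = Y\cdot Z^{K'}$ with $Y$ the leftmost factor, belonging to $\{\del_\alpha, L_b, \delu_b\}$ (no adapted derivatives since the third slot of the type is zero). The base case $N=1$ is verified by direct inspection: $\del_\alpha u$ and $L_a u$ match trivially (with $\Delta\equiv 1$ and $t^0$), while $\delu_a u = t^{-1} L_a u$ matches with $(|I|,|J|)=(0,1)$ and coefficient $t^{-1}$. For $N\geq 2$, invoke the induction hypothesis on $Z^{K'}$, which is of order $N-1\geq 1$ and type $(j',i',0,l')$, to write
\[
Z^{K'}u = \sum_{|I'|\leq i',\,|J'|\leq j'+l',\,|I'|+|J'|\geq 1} t^{-l'-i'+|I'|}\,\Delta^{K'}_{I'J'}\,\del^{I'}L^{J'}u,
\]
and then apply $Y$ to each summand.

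The three cases are handled by Leibniz and the commutator formulas of Lemma \ref{lem 1 decompo commu}. When $Y=\del_\alpha$, three contributions appear; the key observation is that $\del_\alpha t^m$ is homogeneous of degree $m-1$ and $\del_\alpha\Delta$ is homogeneous of degree $-1$, so in each contribution the $t$-exponent decreases by $1$ (matching the decrease $l'+i' \to l+i = l'+i'+1$) or, when $\del_\alpha$ hits $\del^{I'}L^{J'}u$, the multi-index grows to $|I|=|I'|+1\leq i$ and $-l-i+1+|I'|=-l-i+|I|$. When $Y=L_b$, boosts preserve homogeneity degrees ($L_b t = x^b$ is homogeneous of degree $1$), so $L_b$ acting on $t^{-l-i+|I'|}$ or on $\Delta$ keeps the coefficient in the desired form; when $L_b$ hits $\del^{I'}L^{J'}u$ we use $[L_b,\del^{I'}]=\sum_{|I''|=|I'|}\Gamma\,\del^{I''}$ to move $L_b$ past $\del^{I'}$, producing either $\del^{I'}L^J u$ with $|J|=|J'|+1\leq j+l$ or $\del^{I''}L^{J'}u$ with $|I''|=|I'|$. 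When $Y=\delu_b$, write $\delu_b = t^{-1}L_b$, apply the previous case, and then multiply by $t^{-1}$; since $l=l'+1$ the exponent shifts from $-l'-i+|I|$ to $-l-i+|I|$ exactly as required.

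The main obstacle is purely bookkeeping: one must verify at each step that all coefficients factor into $t^{-l-i+|I|}$ times a function homogeneous of degree zero, that the bounds $|I|\leq i$ and $|J|\leq j+l$ are preserved, and that $|I|+|J|\geq 1$ is maintained (automatic since $N\geq 1$ forces at least one derivative to reach $u$). There is no substantive analytic difficulty beyond the identity $\delu_a = t^{-1}L_a$ and the commutation relations of Lemma \ref{lem 1 decompo commu}.
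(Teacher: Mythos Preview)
Your argument is correct and complete; the bookkeeping you describe goes through exactly as you say, and the three cases $Y\in\{\del_\alpha,L_b,\delu_b\}$ together with the commutator \eqref{eq 2 comm} (specialized to $|J|=1$) cover everything.

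The paper proceeds differently: it inducts on $l$ rather than on $N=|K|$. The base case $l=0$ is immediate from Lemma~\ref{lem 1 high-order}. For the inductive step the paper locates the \emph{first} hyperbolic derivative in $Z^K$, writes $Z^K=Z^{K_1}\delu_aZ^{K_2}$ with $Z^{K_1}$ of type $(\cdot,\cdot,0,0)$, replaces $\delu_a=t^{-1}L_a$, and then applies the Leibniz rule to $Z^{K_1}\big(t^{-1}\cdot L_aZ^{K_2}u\big)$; the induction hypothesis (at level $l-1$) is invoked on the operators $Z^{K_{12}}L_aZ^{K_2}$. This avoids your three-way case split, since the only derivative that ever acts on a coefficient is a $\del_\alpha$ or $L_b$ from the purely $(\cdot,\cdot,0,0)$ prefix $Z^{K_1}$, and the homogeneity of $t^{-1}$ handles that in one stroke. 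Your induction on $N$ is in some sense more elementary (no need to locate a distinguished factor inside $K$) but trades that for more cases; either route yields the same decomposition with the same index constraints.
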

\begin{proof}
When $i=l=0$, $j=N$. Then $Z^K = L^K$ which is the form of \eqref{eq 1 lem 2 high-order}. When $i>0,l=0$, we apply \eqref{eq 1 lem 1 high-order}.

Then we proceed by induction on $l$. Suppose that \eqref{eq 1 lem 2 high-order} holds for $l\leq l_0$. Let $Z^K$ be of type $(j,i,0,l)$ with $l = l_0+1$. Suppose that $K = (k_1,k_2,\cdots k_m,\cdots ,k_N)$ with
$$
k_1,k_2,\cdots k_{m-1} \in \{1,2,\cdots 2n+1\}, \quad k_m\in\{3n+3,2n+3\cdots,4n+2\}.
$$
In another word, $Z_{k_m}$ is the first hyperbolic derivative in $Z^K$. We denote by $\delb_a = Z_{k_m}$. Then
$$
Z^Ku = Z^{K_1}\delb_aZ^{K_2}u
$$
with $Z^{K_1}$ being $(j_1,i_1,0,0)$ and $Z^{K_2}$ being $(j_2,i_2,0,l_0)$ with $j_1+j_2=j,i_1+i_2 = i$. Then
\begin{equation}\label{eq 1 pr lem 2 high-order}
Z^{K_1}\delb_aZ^{K_2}u = Z^{K_1}\big(t^{-1}L_aZ^{K_2}\big)u = \sum_{K_{11}+K_{12}=K_1}\!\!\!\!Z^{K_{11}}t^{-1}\cdot Z^{K_{12}}L_aZ^{K_2}u.
\end{equation}
Suppose that $K_{11}$ is of type $(j_{11},i_{11},0,0)$ and $K_{12}$ is of type $(j_{12},i_{12},0,0)$ with $i_{11}+i_{12} = i_1$ and $j_{11}+j_{12} = j_1$. Denote by $Z^{K_{11}'} = Z^{K_{12}}L_aZ^{K_2}$ and remark that $Z^{K_{11}'}$ is of type $(j_{11}',i_{11}',0,l_0)$ with
$$
i_{11}' = i_{12} + i_2,\quad j_{11}' = j_{12} + j_2 + 1.
$$
Then $i_{11}' + j_{11}' + l_0\geq 1$.  Then by the assumption of induction:
$$
Z^{K_{12}}L_aZ^{K_2}u = Z^{K_{11}'}u = \sum_{|I|\leq i'_{11},|J|\leq j_{11}' + 1 + l_0\atop |I|+|J|\geq 1}t^{-l_0 -i'_{11} + |I|}\Delta_{IJ}^{K_{11}'}\del^IL^Ju
$$

On the other hand, by the homogeneity of $t^{-1}$:
$$
|Z^{K_{11}}t^{-1}|\leq t^{-1-i_{11}}\theta
$$
where $\theta$ is a homogeneous function of degree zero. So for each term in right-hand-side of \eqref{eq 1 pr lem 2 high-order},
$$
\aligned
Z^{K_{11}}t^{-1}\cdot Z^{K_{12}}L_aZ^{K_2}u
=& \theta\sum_{|I|\leq i'_{11}|J|\leq j_{11}' + 1 + l_0\atop|I|+|J|\geq 1}\Delta_{IJ}^{K_{11}'}t^{-l_0-1-(i_{11}+i_{11}')+|I|}\del^IL^Ju
\\
=&\sum_{|I|\leq i'_{11} |J|\leq j_{11}' + l\atop |I|+|J|\geq 1}\theta\Delta_{IJ}^{K_{11}'}t^{-l-i+|I|}\del^IL^Ju
\endaligned
$$
and we remark that $\theta\Delta_{IJ}^{K_{11}'}$ are homogeneous functions of degree zero. Now we take the sum over $K_{11}+K_{12}=K_1$, and see that the case for $l = l_0+1$ is guaranteed (here remark that a sum of finite homogeneous functions of degree zero is again homogeneous of degree zero).
\end{proof}


\subsection{Estimates based on commutators}
For the convenience of discussion, we introduce the following integration on hyperboloids. Let $u$ be a function defined in $\Kcal_{[s_0,s_1]}$, sufficiently regular. Then
$$
\int_{\Hcal^*_s}u\,dx := \int_{\{r\leq (s^2-1)/2\}}u\big(\sqrt{s^2+r^2},x\big)\, dx
$$
and
$$
\|u\|^p_{L^p(\Hcal^*_s)} := \int_{\Hcal^*_s}|u|^p dx.
$$

We introduce the following hyperbolic energy:
$$
\aligned
E_{c}(s,u) =& \int_{\Hcal^*_s}\left(|\del_tu|^2 + 2(x^a/t)\del_tu\del_a u + \sum_{a=1}^n|\del_au|^2 + c^2u^2\right)\,dx
\\
=& \int_{\Hcal^*_s}\big(\sum_{a=1}^n|\delu_a u|^2 + \big|(s/t)\del_tu\big|^2 + c^2u^2\big)\, dx
\\
=& \int_{\Hcal^*_s}\big(\del_tu + (x^a/t)\del_au\big)^2 + \sum_{a=1}^n\big|(s/t)\del_au\big|^2 + c^2u^2\, dx
\endaligned
$$
Clearly, this energy controls the following quantities:
$$
\|(s/t)\del_{\alpha}u\|_{L^2(\Hcal^*_s)}, \quad \|\delu_a u\|_{L^2(\Hcal^*_s)},\quad \big\|(\del_t + (x^a/t)\del_a)u\big\|_{L^2(\Hcal^*_s)}
$$
and
$$
\|cu\|_{L^2(\Hcal^*_s)}.
$$

When $c=0$, we write $E(s,u) = E_0(s,u)$ for short. Furthermore:
$$
\Ecal_c^N(s,u):=\sum_{|I|+|J|\leq N}E_c(s,\del^IL^Ju).
$$
When $c=0$, we denote by $\Ecal_c^N(s,u) = \Ecal^N(s,u)$. Then we are ready to state the following result:
\begin{lemma}\label{lem 1 esti-high}
Let $u$ be a function defined in $\Kcal_{[s_0,s_1]}$, sufficiently regular. Let $Z^K$ be a operator of type $(j,i,0,l)$, and let $|K|=N+1\geq 1$. Then the following bounds hold:
\begin{equation}\label{eq 0 lem 1 esti-high}
 \|t^{l-1}Z^K u\|_{L^2(\Hcal^*_s)}\leq C\Ecal^N(s,u)^{1/2},\quad i=0,
\end{equation}
\begin{equation}\label{eq 1 lem 1 esti-high}
\|(s/t)t^l Z^Ku\|_{L^2(\Hcal^*_s)}\leq C\Ecal^N(s,u)^{1/2},\quad i\geq 1,
\end{equation}
When $c>0$, the following bound holds for $|K|\leq N-1$:
\begin{equation}\label{eq 3 lem 1 esti-high}
\|ct^l Z^K u\|_{L^2(\Hcal^*_s)}\leq C\Ecal^N_c(s,u)^{1/2},\quad
\end{equation}
\end{lemma}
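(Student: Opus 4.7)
The plan is to reduce everything to the structural decomposition provided by Lemma \ref{lem 2 high-order}, which writes
$$
Z^K u = \sum_{|I|\leq i,\,|J|\leq j+l,\,|I|+|J|\geq 1} t^{-l-i+|I|}\Delta^K_{IJ}\,\del^I L^J u,
$$
with $\Delta^K_{IJ}$ homogeneous of degree zero, hence uniformly bounded in $\Kcal$. Multiplying by the appropriate weight in each case leaves expressions of the form $t^{|I|-i}\cdot \del^I L^J u$ or $(s/t)\, t^{|I|-i+1}\cdot \del^I L^J u$ (times a bounded coefficient). Since $|I|\leq i$ and $t\geq 1$ in $\Kcal$, we have $t^{|I|-i}\leq 1$. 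The remaining task is to recognize each leftover piece as a quantity already controlled by the energy.

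For \eqref{eq 0 lem 1 esti-high} (the case $i=0$), only terms with $|I|=0$ and $1\leq |J|\leq j+l=N+1$ survive. I would write $L^J=L_a L^{J'}$ with $|J'|=|J|-1\leq N$; the identity $L_a = x^a\del_t + t\del_a$ gives the pointwise equality $t^{-1}L^J u=\delu_a L^{J'}u$, and $\|\delu_a L^{J'}u\|_{L^2(\Hcal^*_s)}\leq \Ecal^N(s,u)^{1/2}$ by definition of the hyperbolic energy. The net weight $t^{l-1}\cdot t^{-l}=t^{-1}$ is precisely what is absorbed by this identity.

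For \eqref{eq 1 lem 1 esti-high} ($i\geq 1$) I split the sum along $|I|$. When $|I|\geq 1$, factoring $\del^I=\del_\alpha\del^{I'}$ with $|I'|=|I|-1$ produces $(s/t)\del_\alpha(\del^{I'}L^J u)$, which is a summand of $E(s,\del^{I'}L^J u)$ and hence bounded by $\Ecal^N(s,u)^{1/2}$ since $|I'|+|J|\leq i-1+j+l=N$. When $|I|=0$ (so $|J|\geq 1$) the boost trick of the previous paragraph turns $t^{-i}L^J u$ into $t^{1-i}\delu_a L^{J'}u$, and the extra factor $(s/t)\cdot t^{1-i}\leq 1$ (using $i\geq 1$) makes the term again dominated by $\|\delu_a L^{J'}u\|_{L^2}$. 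For \eqref{eq 3 lem 1 esti-high} the situation is cleaner: if $|K|=0$ the bound reduces to $\|cu\|_{L^2}\leq E_c(s,u)^{1/2}$, and otherwise Lemma \ref{lem 2 high-order} yields $c\,t^l Z^Ku=\sum c\,t^{|I|-i}\Delta^K_{IJ}\del^I L^J u$ with $|I|+|J|\leq |K|\leq N-1$, so each term is bounded in $L^2$ by $\|c\,\del^I L^J u\|_{L^2}\leq \Ecal^N_c(s,u)^{1/2}$, without needing to extract any extra derivative.

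The main point of care is the edge of the admissible range in \eqref{eq 0 lem 1 esti-high} and \eqref{eq 1 lem 1 esti-high}: the total order $|I|+|J|$ produced by Lemma \ref{lem 2 high-order} can reach $N+1$, one above what the energy $\Ecal^N$ tolerates, so one is forced to spend the last derivative through the identities $L_a=t\delu_a+x^a\del_t$ or $\del^I=\del_\alpha\del^{I'}$. This is exactly why the constraint $|I|+|J|\geq 1$ in Lemma \ref{lem 2 high-order} is indispensable, and I expect the only nontrivial bookkeeping to be verifying that no term escapes the energy count once that last derivative has been absorbed.
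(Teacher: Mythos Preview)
Your proof is correct and mirrors the paper's approach: both apply Lemma~\ref{lem 2 high-order} and then extract one derivative, via $\del^I=\del_\alpha\del^{I'}$ when $|I|\geq 1$ or via $L_a=t\delu_a$ when $|I|=0$, to bring each term under the energy $\Ecal^N$. One small slip in your closing paragraph: you write $L_a=t\delu_a+x^a\del_t$, but in fact $L_a=t\delu_a$ exactly (as you use correctly in the body of the argument).
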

\begin{proof}
\eqref{eq 0 lem 1 esti-high} is direct by \eqref{eq 1 lem 2 high-order}. To see this let us consider
$$
t^{-l-i+|I|}\Delta_{IJ}^K\del^IL^Ju,\quad |I|+|J|\geq 1.
$$

Recall that $|I|=i=0$, then $|J|\geq 1$. We denote by $L^J = L_aL^{J'}$. Then (recall $i\geq 0$)
$$
\aligned
\|t^{l-1}\big(t^{-l-i+|I|}\Delta_{IJ}^K\del^IL^Ju\big)\|_{L^2(\Hcal^*_s)} \leq& C \|t^{-1}L_aL^{J'}u\|_{L^2(\Hcal^*_s)}
\\
=& C\|\delu_aL^{J'}u\|_{L^2(\Hcal^*_s)}
\\
\leq& CE(s,L^{J'}u)^{1/2}\leq C\Ecal^{N-1}(s,u)^{1/2}.
\endaligned
$$


For \eqref{eq 1 lem 1 esti-high}, remark that in this case $i\geq 1$. By \eqref{eq 1 lem 2 high-order}, we consider
$$
t^{-l-i+|I|}\Delta_{IJ}^K\del^IL^Ju,\quad |I|+|J|\geq 1.
$$
As in discussion on \eqref{eq 0 lem 1 esti-high}, when $|I|\geq 1$, we denote by $\del^I = \del_{\alpha}\del^{I'}$. Then (recall that $i\geq|I|$)
$$
\aligned
\|t^l(s/t)\cdot t^{-l-i+|I|}\Delta_{IJ}^K\del^IL^Ju\|_{\Hcal^*_s}
\leq& \|t^{-i+|I|}(s/t)\del_{\alpha}\del^{I'}L^Ju\|_{\Hcal^*_s}
\\
\leq& CE(s,\del^{I'}L^Ju)^{1/2}\leq C\Ecal^{N-1}(s,u)^{1/2}.
\endaligned
$$

When $|I|=0$, then $|J|\geq 1$. We denote by $L^J = L_aL^{J'}$. Then (recall $i\geq 1$)
$$
\aligned
\|t^l(s/t)\big(t^{-l-i+|I|}\Delta_{IJ}^K\del^IL^Ju\big)\|_{L^2(\Hcal^*_s)} \leq& C \|t^{-i}L_aL^{J'}u\|_{L^2(\Hcal^*_s)}
\\
=& C\|t^{-i+1}\delu_aL^{J'}u\|_{L^2(\Hcal^*_s)}
\\
\leq& CE(s,L^{J'}u)^{1/2}\leq C\Ecal^{N-1}(s,u)^{1/2}.
\endaligned
$$

\eqref{eq 3 lem 1 esti-high} is direct by \eqref{eq 1 lem 2 high-order}, we omit the detail.
\end{proof}

The following result is to be combined Klainerman-Sobolev inequality in order to establish decay estimates. Let $p_n = [n/2]+1$.
\begin{lemma}\label{lem 2 esti-high}
Let $u$ be a function defined in $\Kcal_{[s_0,s_1]}$, sufficiently regular.  Let $|I_0|+|J_0|\leq p_n$, then the following bounds hold for $Z^K$ of type $(j,i,0,l)$ with $|K|\leq N-p_n+1$:
\begin{equation}\label{eq 0 lem 2 esti-high}
\big\|\del^{I_0}L^{J_0}\big(t^{l-1}Z^Ku\big)\big\|_{L^2(\Hcal^*_s)}\leq C\Ecal^N(s,u)^{1/2},\quad i=0
\end{equation}
\begin{equation}\label{eq 1 lem 2 esti-high}
\big\|\del^{I_0}L^{J_0}\big(t^l(s/t)Z^Ku\big)\big\|_{L^2(\Hcal^*_s)}\leq C\Ecal^N(s,t)^{1/2},\quad i\geq 1.
\end{equation}
When $c>0$:
\begin{equation}\label{eq 3 lem 2 esti-high}
\big\|c\del^{I_0}L^{J_0}\big(t^lZ^Ku\big)\big\|_{L^2(\Hcal^*_s)}\leq C\Ecal_c^N(s,t)^{1/2}.
\end{equation}
\end{lemma}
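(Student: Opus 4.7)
The plan is to reduce Lemma \ref{lem 2 esti-high} to the proof of Lemma \ref{lem 1 esti-high} by distributing the outer operator $\del^{I_0}L^{J_0}$ via the Leibniz rule (Lemma \ref{lem 1 Leinbiz}) and then applying the high-order decomposition (Lemma \ref{lem 2 high-order}) to the factor that falls on $Z^Ku$. Concretely, for a weight $W\in\{t^{l-1},\,(s/t)t^l,\,ct^l\}$, I would begin with
\begin{equation*}
\del^{I_0}L^{J_0}\bigl(W\cdot Z^Ku\bigr) = \sum_{I_{01}+I_{02}=I_0\atop J_{01}+J_{02}=J_0} \del^{I_{01}}L^{J_{01}}W \cdot \del^{I_{02}}L^{J_{02}}Z^Ku.
\end{equation*}

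The first factor is bounded pointwise by a direct computation: Proposition \ref{prop 1 homo} gives $|\del^{I_{01}}L^{J_{01}}t^p|\leq Ct^{p-|I_{01}|}$ for any integer $p$, and Lemma \ref{lem 3 s/t} handles $W=(s/t)t^l$. For the second factor, the composition $Z^{K''}:=\del^{I_{02}}L^{J_{02}}Z^K$ is itself an operator of type $(j+|J_{02}|,\,i+|I_{02}|,\,0,\,l)$, so Lemma \ref{lem 2 high-order} yields
\begin{equation*}
\del^{I_{02}}L^{J_{02}}Z^Ku = \sum_{|I|\leq i+|I_{02}|,\;|J|\leq j+|J_{02}|+l\atop |I|+|J|\geq 1} t^{-l-(i+|I_{02}|)+|I|}\,\Delta_{IJ}^{K''}\,\del^IL^Ju
\end{equation*}
with $\Delta_{IJ}^{K''}$ homogeneous of degree zero.

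Combining the two, I would simplify the total $t$-exponent in front of $\del^IL^Ju$ using $|I|\leq i+|I_{02}|$ together with the elementary inequalities $t\geq 1$, $s\geq 1$, and $1/s\leq s/t$ in $\Kcal$ (the last because $r\leq t-1$ forces $s^2\geq t$). For the $t^{l-1}$ case the combined weight reduces to $Ct^{-1}$, for $(s/t)t^l$ to $C(s/t)$, and for $ct^l$ to $Cc$. Then, exactly as in the proof of Lemma \ref{lem 1 esti-high}, I split on $|I|$: when $|I|\geq 1$ I write $\del^I=\del_\alpha\del^{I'}$ and absorb $(s/t)\del_\alpha$ (using $t^{-1}\leq s/t$ when needed) into $E(s,\del^{I'}L^Ju)^{1/2}$; when $|I|=0$ the constraint $|I|+|J|\geq 1$ forces $|J|\geq 1$, and factoring $L^J=L_aL^{J'}$ gives $t^{-1}L^Ju=\delu_aL^{J'}u$, controlled by $E(s,L^{J'}u)^{1/2}$. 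The $c>0$ case is treated analogously via the $\|cv\|$ component of $E_c(s,v)$.

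The main obstacle is the combinatorial bookkeeping: tracking the type $(j+|J_{02}|,\,i+|I_{02}|,\,0,\,l)$ of the composed operator correctly, keeping the weight estimates sharp enough that the final bound is $\Ecal^N$ rather than $\Ecal^{N+1}$, and verifying that the $\delu_a$ or $(s/t)\del_\alpha$ trick reduces the worst-case order $|K''|=|K|+|I_{02}|+|J_{02}|\leq N+1$ down to exactly $N$ on $u$. No essentially new idea beyond those already used in Lemma \ref{lem 1 esti-high} is needed.
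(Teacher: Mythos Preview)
Your approach is correct and follows the same outline as the paper: split via Leibniz, bound the weight factor using Lemma~\ref{lem 3 s/t} (and homogeneity), and control the remaining factor $\del^{I_{02}}L^{J_{02}}Z^Ku$ as a new operator of type $(j+|J_{02}|,i+|I_{02}|,0,l)$ and order $\leq N+1$. The only difference is one of economy: the paper invokes Lemma~\ref{lem 1 esti-high} directly on this composed operator, whereas you unpack it one level further via Lemma~\ref{lem 2 high-order} and redo the $|I|=0$/$|I|\geq 1$ case split from the proof of Lemma~\ref{lem 1 esti-high}---this is equivalent but slightly redundant.
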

\begin{proof}
Recall \eqref{eq 1 lem 3 s/t} and the fact that $(t/s^2)\leq C$ in $\Kcal$. Then
$$
\del^{I_0}L^{J_0}\big(t^{l-1}Z^Ku\big) = \sum_{I_{01}+I_{02}=I_0\atop J_{01}+J_{02}=J_0}\del^{I_{01}}L^{J_{01}}t^{l-1}\cdot\del^{I_{02}}L^{J_{02}}Z^Ku
$$
Then each term in right-hand-side, we apply \eqref{eq 1 lem 3 s/t} on the first factor. For second factor, remark that
$$
\del^{I_{02}}L^{J_{02}}Z^K
$$
is of order $\leq N+1$. Then by \eqref{eq 1 lem 1 esti-high}, \eqref{eq 0 lem 2 esti-high} and \eqref{eq 1 lem 2 esti-high}  are established.

\eqref{eq 3 lem 2 esti-high} are established in the same manner, we omit the detail.
\end{proof}

\subsection{Estimates on Hessian form}
In this section, we concentrate on the estimates on the following terms:
$$
\del_{\alpha}\del_{\beta}Z^Ku,\quad Z^K\del_{\alpha}\del_{\beta}u.
$$
With a bit abuse of notation, we call these terms the {Hessian form} of $u$ of order $|K|$ . We first remark the following decomposition of Hessian form of a function with respect to SHF:
\begin{lemma}\label{lem 1 Hessian}
Let $u$ be a function defined in $\Kcal_{[s_0,s_1]}$, sufficiently regular. Then
\begin{equation}\label{eq 1 lem 1 Hessian}
\del_t\del_a u = -\frac{x^a}{t}\del_t\del_t u + t^{-1}\big(\del_tL_a - \delb_a + (x^a/t)\del_t\big)u
\end{equation}
\begin{equation}\label{eq 2 lem 1 Hessian}
\del_a\del_bu = \frac{x^ax^b}{t^2}\del_t\del_t + t^{-1}\big(\del_aL_b - (x^b/t)\del_tL_a + (x^b/t)\delb_a - \delta_{ab}\del_t - (x^ax^b/t^2)\del_t\big)
\end{equation}
\end{lemma}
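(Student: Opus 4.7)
The key observation is that the three operators $\del_a$, $L_a$, and $\delb_a$ are related by very simple identities on $\Kcal$: since $L_a = x^a\del_t + t\del_a$ and $\delb_a = (x^a/t)\del_t + \del_a$, one has
\[
 t\delb_a = L_a, \qquad \del_a = \delb_a - (x^a/t)\del_t = t^{-1}L_a - (x^a/t)\del_t.
\]
The whole lemma is an exercise in substituting these identities and applying the product rule, treating the coefficients $t^{-1}$, $x^a/t$, $x^ax^b/t^2$ as smooth functions of $(t,x)$ on $\Kcal$.

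For equation \eqref{eq 1 lem 1 Hessian}, the plan is to write $\del_a u = \delb_a u - (x^a/t)\del_t u$ and hit with $\del_t$. Distributing gives
\[
\del_t\del_a u = \del_t\delb_a u - (x^a/t)\del_t\del_t u + (x^a/t^2)\del_t u.
\]
Then I replace $\delb_a = t^{-1}L_a$, so $\del_t\delb_a u = -t^{-2}L_a u + t^{-1}\del_t L_a u = -t^{-1}\delb_a u + t^{-1}\del_t L_a u$. Collecting terms yields exactly the claimed formula. The only subtlety is remembering $\del_t(t^{-1}) = -t^{-2}$ and converting $-t^{-2}L_a$ back to $-t^{-1}\delb_a$ so that a $\delb_a$ appears in the bracket rather than an $L_a$.

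For equation \eqref{eq 2 lem 1 Hessian}, I apply the same strategy: expand $\del_b u = t^{-1}L_b u - (x^b/t)\del_t u$, then hit with $\del_a$. Since $\del_a$ kills $t$ and acts on $x^b$ as $\del_a(x^b)=\delta_{ab}$, the product rule yields
\[
\del_a\del_b u = t^{-1}\del_a L_b u - (\delta_{ab}/t)\del_t u - (x^b/t)\del_a\del_t u.
\]
The remaining mixed term $\del_a\del_t u = \del_t\del_a u$ is handled by feeding it the already-established formula \eqref{eq 1 lem 1 Hessian}, which produces the $(x^ax^b/t^2)\del_t\del_t u$ term together with the $-(x^b/t^2)\del_t L_a u$, $+(x^b/t^2)\delb_a u$, and $-(x^ax^b/t^3)\del_t u$ contributions. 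Factoring out a $t^{-1}$ from the lower-order terms gives the stated bracket.

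The main obstacle is purely bookkeeping: keeping track of signs and of which factor of $t$ is absorbed into $\delb_a=t^{-1}L_a$ versus the overall $t^{-1}$ in front. There is no conceptual hurdle, and commutators do not need to be invoked in the second identity because one simply differentiates the smooth coefficient $x^b/t$ rather than commuting $\del_a$ past $L_b$; differentiating directly is cleaner than using $[L_b,\del_a]=\delta_{ab}\del_t$ in this particular calculation.
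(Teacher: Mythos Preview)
Your proposal is correct and is exactly the direct calculation the paper alludes to; the paper's own proof of this lemma reads ``These can be verified by direct calculation and we omit the detail,'' and you have supplied precisely those details. One minor slip: in your closing remark the commutator is $[L_b,\del_a]=-\delta_{ab}\del_t$, not $+\delta_{ab}\del_t$, but since you explicitly do not use it this does not affect anything.
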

\begin{proof}
These can be verified by direct calculation and we omit the detail.
\end{proof}

Then by the above lemma, we have the following decomposition of the D'Alembert operator with respect to SHF:
\begin{equation}\label{eq 1 decompo-box-semi}
\Box = (s/t)^2\del_t\del_t + t^{-1}\left((2x^a/t)\del_tL_a - \sum_a\delu_aL_a - (x^a/t)\delu_a + (n+(r/t)^2)\del_t\right)
\end{equation}
This can be verified by direct calculation, we omit the detail. Then
\begin{equation}\label{eq 2 decompo-box-semi}
(s/t)^2\del_t\del_tu = \Box u - t^{-1}A_m[u]
\end{equation}
with
\begin{equation}
A_m[u] = \left((2x^a/t)\del_tL_a - \sum_a\delu_aL_a - (x^a/t)\delu_a + (n+(r/t)^2)\del_t\right)u.
\end{equation}
Here $m$ represents the Minkowski metric. Now we are ready to establish the following result:
\begin{lemma}\label{lem 2 Hessian}
Let $u$ be a function defined in $\Kcal_{[s_0,s_1]}$, sufficiently regular. Suppose that $Z^K$ is of type $(j,i,0,0)$. Then the following bounds hold for $|K|\leq N-1$:
\begin{equation}\label{eq 1 lem 2 Hessian}
\big\|t(s/t)^3\del_{\alpha}\del_{\beta}Z^Ku\big\|_{L^2(\Hcal^*_s)} + \big\|t(s/t)^3Z^K\del_{\alpha}\del_{\beta}u\big\|_{\Hcal^*_s}\leq C\Ecal^N(s,u)^{1/2} + C\sum_{|K'|\leq K}\|sZ^{K'}\Box u\|_{L^2(\Hcal^*_s)}.
\end{equation}
Furthermore, if we take $|I_0|+|J_0|\leq p_n = [n/2]+1$, then for all $Z^K$ of type $(j,i,0,0)$ with $|K'|\leq N-p_n-1$,
\begin{equation}\label{eq 2 lem 2 Hessian}
\aligned
\big\|\del^{I_0}L^{J_0}\big(t(s/t)^3\del_{\alpha}\del_{\beta}Z^{K'}u\big)\big\|_{L^2(\Hcal^*_s)}
+& \big\|\del^{I_0}L^{J_0}\big(t(s/t)^3Z^{K'}\del_{\alpha}\del_{\beta}u\big)\big\|_{\Hcal^*_s}
\\
\leq& C\Ecal^N(s,u)^{1/2} + C\sum_{|K''|\leq N-1}\|sZ^{K''}\Box u\|_{L^2(\Hcal^*_s)}
\endaligned
\end{equation}
\end{lemma}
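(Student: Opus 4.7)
The plan is to reduce every one of the four norms in the statement to the model quantity $\|t(s/t)^3\del_{\alpha'}\del_{\beta'}Z^{K^*}u\|_{L^2(\Hcal^*_s)}$ with $|K^*|\le N-1$, and to treat that model quantity by combining the SHF Hessian decomposition of Lemma~\ref{lem 1 Hessian} with the D'Alembertian identity~\eqref{eq 2 decompo-box-semi}. The weight $t(s/t)^3$ is tuned so that the only ``bad'' term $\del_t\del_t Z^Ku$ absorbs exactly the factor $(s/t)^2$ needed to apply $(s/t)^2\del_t\del_t=\Box-t^{-1}A_m$, leaving a remainder of one derivative that is handled by the hyperbolic energy.

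For the outer form $\del_\alpha\del_\beta Z^Ku$ in~\eqref{eq 1 lem 2 Hessian}, I plug~\eqref{eq 1 lem 1 Hessian}--\eqref{eq 2 lem 1 Hessian} into $\del_\alpha\del_\beta$. The $\del_t\del_t$ piece becomes, after multiplication by $t(s/t)^3$,
\[
t(s/t)^3\,\del_t\del_tZ^Ku \;=\; (s/t)\,t\,\Box Z^Ku \;-\; (s/t)\,A_m[Z^Ku].
\]
Since each $L_a$ and each $\del_\alpha$ commutes with $\Box$, the type-$(j,i,0,0)$ operator $Z^K$ commutes with $\Box$, giving $\Box Z^Ku=Z^K\Box u$, so the first piece is pointwise $\le s|Z^K\Box u|$ and contributes to the second term on the right. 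The $(s/t)A_m[Z^Ku]$ piece, together with the other contributions of Lemma~\ref{lem 1 Hessian} (weighted by $t(s/t)^3\cdot t^{-1}=(s/t)^3$), expand into sums of expressions such as $(s/t)\del_tL_aZ^Ku$, $(s/t)\delu_aL_aZ^Ku$, $(s/t)(x^a/t)\delu_aZ^Ku$ and $(s/t)\del_tZ^Ku$, each with bounded homogeneous coefficients; each is an energy-controlled quantity at order $\le|K|+1\le N$, hence bounded in $L^2(\Hcal^*_s)$ by $\Ecal^N(s,u)^{1/2}$. The inner form $Z^K\del_\alpha\del_\beta u$ is first rewritten via Lemma~\ref{lem 1 high-order} as a constant combination of $\del^IL^J\del_\alpha\del_\beta u$ with $|I|=i$, $|J|\le j$; applying~\eqref{eq 2 comm} to commute $L^J$ past $\del_\alpha\del_\beta$ realizes everything as $\del_{\alpha'}\del_{\beta'}Z^{K^*}u$ with $|K^*|\le|K|\le N-1$, which reduces this half to the outer case.

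For~\eqref{eq 2 lem 2 Hessian}, I Leibniz-expand $\del^{I_0}L^{J_0}(t(s/t)^3\cdot v)$ with $v$ the relevant Hessian form. Lemma~\ref{lem 3 s/t} (with $k=3$, $l=1$) yields $|\del^{I_{01}}L^{J_{01}}(t(s/t)^3)|\le Ct(s/t)^3$ when $|I_{01}|=0$ and $\le C(s/t)\le C$ when $|I_{01}|\ge 1$. In the first subcase, the remaining factor $\del^{I_{02}}L^{J_{02}}\del_\alpha\del_\beta Z^{K'}$ is rearranged via Lemma~\ref{lem 1 high-order} and~\eqref{eq 2 comm} as $\del_{\alpha'}\del_{\beta'}Z^{K^{**}}$ with $|K^{**}|\le|I_0|+|J_0|+|K'|\le N-1$, so~\eqref{eq 1 lem 2 Hessian} closes the bound. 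In the second subcase the same commutation produces $(s/t)\del_{\alpha'}\del_{\beta'}Z^{K^{**}}u$ with $|K^{**}|\le N-2$; writing this as $(s/t)\del_{\alpha'}(\del_{\beta'}Z^{K^{**}}u)$ with $\del_{\beta'}Z^{K^{**}}u$ of order $\le N-1$ and invoking the energy control $\|(s/t)\del_{\alpha'}w\|_{L^2(\Hcal^*_s)}\le E(s,w)^{1/2}$ finishes the estimate. The inner variant $Z^{K'}\del_\alpha\del_\beta u$ needs only the preliminary reduction of the previous paragraph before the same analysis applies. The main obstacle, beyond the clean algebraic identity $[\Box,L_a]=0$, is the index bookkeeping in this second subcase: the drop $|I_{02}|\le|I_0|-1$ forced by $|I_{01}|\ge 1$ is exactly what rescues the missing $t(s/t)^2$ in the weight and allows the ordinary one-derivative energy to absorb the remaining Hessian.
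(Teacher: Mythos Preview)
Your argument is correct and follows the paper's approach essentially step for step: decompose the Hessian via Lemma~\ref{lem 1 Hessian}, isolate $\del_t\del_t$ through~\eqref{eq 2 decompo-box-semi}, use $[Z^K,\Box]=0$ for type-$(j,i,0,0)$ operators, reduce the inner form via Lemma~\ref{lem 1 high-order}, and Leibniz-expand with~\eqref{eq 1 lem 3 s/t} for~\eqref{eq 2 lem 2 Hessian}. The only cosmetic difference is that your case split on $|I_{01}|\ge 1$ in the last step is unnecessary: since $t/s^2\le C$ in $\Kcal$, the bound $|\del^{I_{01}}L^{J_{01}}(t(s/t)^3)|\le Ct(s/t)^3$ holds uniformly, and one may invoke~\eqref{eq 1 lem 2 Hessian} directly in both cases.
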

\begin{proof}
By \eqref{eq 2 decompo-box-semi},
$$
t(s/t)^3\del_t\del_tv = s\Box v - (s/t)A_m[v].
$$
Remark that
$$
\|(s/t)A_m[v]\|_{L^2(\Hcal^*_s)}\leq C\sum_{|J|\leq 1}E(s,L^Jv)^{1/2},
$$
Then
\begin{equation}\label{eq 1 pr lem 2 Hessian}
\|t(s/t)^3\del_t\del_tv\|_{L^2(\Hcal^*_s)}\leq \|s\Box v\|_{L^2(\Hcal^*_s)} + C\Ecal^1(s,v)^{1/2}
\end{equation}

Then recall \eqref{eq 1 lem 1 Hessian} and \eqref{eq 2 lem 1 Hessian}, we see that (remark that $x^a/t$ are homogeneous of degree zero, thus bounded in $\Kcal$):
$$
\|t(s/t)^3\del_t\del_av\|_{L^2(\Hcal^*_s)} + \|t(s/t)^3\del_a\del_bv\|_{L^2(\Hcal^*_s)}\leq C\|t(s/t)^3\del_t\del_tv\|_{L^2(\Hcal^*_s)} +  C\Ecal^1(s,v)^{1/2}.
$$
Combined with \eqref{eq 1 pr lem 2 Hessian},
\begin{equation}\label{eq 2 pr lem 2 Hessian}
\|t(s/t)^3\del_{\alpha}\del_{\beta}v\|_{L^2(\Hcal^*_s)}\leq C\|s\Box v\|_{L^2(\Hcal^*_s)} + C\Ecal^1(s,v)^{1/2}.
\end{equation}
Then, we take $v = Z^Ku$ with $Z^K$ being of type $(j,i,0,0)$ and $|K|\leq N-1$. Recall that
$$
[Z^K,\Box] = 0,\quad\text{ if $Z^K$ is composed by $\del_{\alpha}$ and $L_a$.}
$$
Furthermore, recall \eqref{eq 1 lem 1 high-order}, $Z^Ku$, $L_aZ^Ku$ are linear combinations of $\del^IL^Ju$ with $|I|+|J|\leq N-1$ with constant coefficients. So
$$
\Ecal^1(s,Z^Ku)^{1/2}\leq C\Ecal^N(s,u)^{1/2}.
$$
Then
\begin{equation}\label{eq 3 pr lem 2 Hessian}
\|t(s/t)^3\del_{\alpha}\del_{\beta}Z^Ku\|_{L^2(\Hcal^*_s)}\leq C\|sZ^K \Box u\|_{L^2(\Hcal^*_s)} + \Ecal^N(s,u)^{1/2}.
\end{equation}

Once \eqref{eq 3 pr lem 2 Hessian} is proved, we recall \eqref{eq 1 lem 1 high-order} and see that the following terms
$$
Z^K\del_{\alpha}\del_{\beta}u
$$
is a linear combination of $\del_{\alpha}\del_{\beta}\del^IL^Ju$ with $|I|+|J|\leq N-1$. Then by \eqref{eq 3 pr lem 2 Hessian}, \eqref{eq 1 lem 2 Hessian} is established.

For \eqref{eq 2 lem 2 Hessian}, we only need to recall \eqref{eq 1 lem 3 s/t} and the following calculation:
$$
\del^{I_0}L^{J_0}\big(t(s/t)^3\del_{\alpha}\del_{\beta}Z^{K'}u\big) =
\sum_{I_{01}+I_{02} = I_0\atop J_{01}+J_{02}=J_0}\del^{I_{01}}L^{J_{01}}\big(t(s/t)^3\big)\cdot \del^{I_{02}}L^{J_{02}}\del_{\alpha}\del_{\beta}Z^{K'}u.
$$
\end{proof}

\appendix
\section{Proof of lemma \ref{lem 1 Leinbiz} and \ref{lem 1 faa}}\label{sec A-1}
For the convenience of discussion, for $I = (i_1,i_2,\cdots i_N)$ we consider $I' = (i_0,i_1,\cdots i_N)$ with $i_0\in \{1,2,\cdots 4n+2\}$. The graph of $I'$ is
$$
\mathcal{G}(I') = \{(k,i_k)|k=0,1,2,\cdots N\}.
$$
Then we introduce the following notation
$$
\mathscr{D}_{ml}(I') = \big\{(\mathcal{I'}_1,\mathcal{I'}_2\cdots \mathcal{I'}_m)\in \mathscr{D}_m(I')\big|(0,i_0)\in\mathcal{I}'_l\big\}.
$$
Then
$$
\mathscr{D}_m(I') = \bigcup_{l=1}^m \mathscr{D}_{ml}(I'),\quad \mathscr{D}_{ml}(I') \cap \mathscr{D}_{ml'}(I') = \emptyset,\quad l\neq l'.
$$

We define the following map for $1\leq l\leq m$:
$$
\aligned
p_l :\quad  &\mathscr{D}_m(I) &&\rightarrow \mathscr{D}_{ml}(I')
\\
(\mathcal{I}_1,\mathcal{I}_2,\cdots &\mathcal{I}_l,\cdots,\mathcal{I}_m) &&\rightarrow (\mathcal{I}_1,\mathcal{I}_2,\cdots\mathcal{I'}_l,\cdots ,\mathcal{I}_m).
\endaligned
$$
with $\mathcal{I'}_l = \{(0,i_0)\}\cup \mathcal{I}_l$.

\begin{lemma}\label{lem 1 partition}
With the above notation, $p_l$  are bijective.
\end{lemma}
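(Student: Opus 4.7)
The plan is to construct an explicit two-sided inverse for $p_l$, exploiting the fact that the extra vertex $(0,i_0)$ is distinguished: it is the unique element of $\mathcal{G}(I')\setminus\mathcal{G}(I)$, and by definition of $\mathscr{D}_{ml}(I')$ it must sit inside the $l$-th block of any element of the target.

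First I would check injectivity. Suppose $p_l(\mathcal{I}_1,\ldots,\mathcal{I}_m)=p_l(\mathcal{J}_1,\ldots,\mathcal{J}_m)$. Componentwise this gives $\mathcal{I}_k=\mathcal{J}_k$ for $k\neq l$, and $\{(0,i_0)\}\cup\mathcal{I}_l=\{(0,i_0)\}\cup\mathcal{J}_l$. Since $\mathcal{I}_l,\mathcal{J}_l\subset\mathcal{G}(I)$ and $D(\mathcal{G}(I))=\{1,2,\ldots,N\}$ does not contain $0$, neither $\mathcal{I}_l$ nor $\mathcal{J}_l$ contains $(0,i_0)$; removing this element from both sides yields $\mathcal{I}_l=\mathcal{J}_l$.

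For surjectivity I would define a candidate inverse $q_l:\mathscr{D}_{ml}(I')\to\mathscr{D}_m(I)$ by
$$
q_l(\mathcal{I}'_1,\ldots,\mathcal{I}'_m)=(\mathcal{I}'_1,\ldots,\mathcal{I}'_{l-1},\mathcal{I}'_l\setminus\{(0,i_0)\},\mathcal{I}'_{l+1},\ldots,\mathcal{I}'_m).
$$
Because $(0,i_0)\in\mathcal{I}'_l$ and the blocks of an $m$-partition are pairwise disjoint, $(0,i_0)$ appears in none of the other $\mathcal{I}'_k$, so each $\mathcal{I}'_k$ with $k\neq l$ is already a subset of $\mathcal{G}(I)$. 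The remaining block $\mathcal{I}'_l\setminus\{(0,i_0)\}$ is likewise contained in $\mathcal{G}(I)$. Disjointness of the resulting $m$ sets is inherited, and their union is $\bigl(\bigcup_k\mathcal{I}'_k\bigr)\setminus\{(0,i_0)\}=\mathcal{G}(I')\setminus\{(0,i_0)\}=\mathcal{G}(I)$, so $q_l$ indeed lands in $\mathscr{D}_m(I)$.

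Finally I would verify $p_l\circ q_l=\mathrm{id}$ and $q_l\circ p_l=\mathrm{id}$ directly from the definitions: applying $q_l$ removes $(0,i_0)$ from the $l$-th slot and $p_l$ puts it back, while the other slots are untouched in both directions. The only point that requires any care is making sure the union condition is preserved under these insertions and deletions, which is immediate once one notes that $(0,i_0)$ does not belong to $\mathcal{G}(I)$. I do not expect any genuine obstacle; the lemma is essentially a bookkeeping statement isolating the role of the extra index $i_0$, and it will be used later to push an induction on $|I|$ in the proof of the Leibniz rule in Appendix~\ref{sec A-1}.
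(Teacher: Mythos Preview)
Your proof is correct and follows essentially the same approach as the paper: the paper declares injectivity obvious and then constructs the preimage of an arbitrary element of $\mathscr{D}_{ml}(I')$ exactly as your $q_l$ does, verifying that the blocks lie in $\mathcal{G}(I)$, are disjoint, and have union $\mathcal{G}(I)$. Your version is slightly more explicit in checking both compositions $p_l\circ q_l$ and $q_l\circ p_l$, but the content is identical.
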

\begin{proof}
It is obvious that $p_l$ are injective. To prove that $p_l$ are surjective, let $(\mathcal{I'}_1,\mathcal{I'}_2,\cdots\mathcal{I'}_l,\cdots ,\mathcal{I'}_m)\in\mathscr{D}_{ml}(I')$ with $\mathcal{I'}_k\subset \mathcal{G}(I')$. Recall the definition of $\mathscr{D}_{ml}$,
$$
(0,i_0)\in \mathcal{I'}_l.
$$
This leads to the fact that when $k\neq l$,
$$
D(\mathcal{I'}_k)\subset \{1,2,\cdots N\} \quad \Rightarrow \quad \mathcal{I'}_k\subset \mathcal{G}(I)
$$
and
$$
D(\mathcal{I'}_l\backslash \{(0,i_0)\}) \subset\{1,2,\cdots N\} \quad \Rightarrow \quad \mathcal{I'}_l\backslash \{(0,i_0)\}\subset \mathcal{G}(I).
$$
Then we define
$$
\mathcal{I}_k = \left\{
\aligned
&\mathcal{I'}_k,\quad &&k\neq l,
\\
&\mathcal{I'}_l\backslash \{(0,i_0)\},\quad &&k=l.
\endaligned
\right.
$$

It is clear that for $k\neq k'$,  $\mathcal{I'}_k\cap\mathcal{I'}_{k'}=\emptyset\Rightarrow \mathcal{I}_k\cap\mathcal{I}_{k'}=\emptyset$ . Furthermore, from the fact
$$
\mathcal{G}(I') = \bigcup_{k=1}^m\mathcal{I'}_k
$$
we see that
$$
\mathcal{G}(I) = \mathcal{G}(I')\backslash\{(0,i_0)\} = \bigcup_{k=1}^m\mathcal{I}_k.
$$

Thus $(\mathcal{I}_1,\mathcal{I}_2,\cdots \mathcal{I}_m)\in\mathscr{D}_m(I)$ is the preimage of $(\mathcal{I'}_1,\mathcal{I'}_2,\cdots \mathcal{I'}_m)$ with respect to $p_l$.
\end{proof}

\begin{proof}[Proof of lemma \ref{lem 1 Leinbiz}]
This is by induction on $|I|$. When $|I|=1$, let $Z^I = Z_k\in \mathscr{Z}$. Remark that $Z_k$ is a first-order differential operator, then by Leibniz rule,
\begin{equation}\label{eq 1 pr lem Leibniz}
Z_k\big(u_1\cdot u_2\cdots u_m\big) = Z_ku_1\cdot u_2\cdots u_m + u_1\cdot Z_ku_2\cdots u_m + \cdots u_1\cdot u_2\cdots Z_ku_m.
\end{equation}
On the other hand, remark that for the set $\{(1,k)\}$, all possible $m-$partitions are in the following form:
$$
\{(1,k)\} = \bigcup_{l=1}^m \mathcal{I}_l,\quad
\mathcal{I}=\left\{
\aligned
&\{(1,k)\},\quad &&l = l_0,
\\
&\emptyset,\quad &&l\neq l_0.
\endaligned
\right.
$$
with $l_0 = 1,2,\cdots m$. Thus in the right-hand-side of \eqref{eq 1 pr lem Leibniz} the sum runs over all possible $1-$partituion of $Z_k$ and this concludes the case of \eqref{eq 1 Leibniz} with $|I|=1$.

Now let $Z^{I'} = Z_{i_0}Z^I$ with $Z_{i_0}\in\mathscr{Z}$ and we denote by $I' = (i_0,i_1,\cdots i_N)$ with graph $\mathcal{G}(I') = \{(k,i_k)|k=0,1,2,\cdots N\}$. Then
\begin{equation}\label{eq 2 pr lem Leibniz}
\aligned
&Z_{i_0}Z^I(u_1\cdot u_2\cdots u_m)
\\
=& Z_{i_0}\bigg(\sum_{I_1+I_2+\cdots +I_m = I}Z^{I_1}u_1\cdot Z^{I_2}u_2\cdots Z^{I_m}u_m\bigg)
\\
=& \sum_{I_1+I_2+\cdots +I_m = I}Z_{i_0}Z^{I_1}u_1\cdot Z^{I_2}u_2\cdots Z^{I_m}u_m
 + \sum_{I_1+I_2+\cdots +I_m = I}Z^{I_1}u_1\cdot Z_{i_0}Z^{I_2}u_2\cdots Z^{I_m}u_m
\\
 &+\cdots
 + \sum_{I_1+I_2+\cdots +I_m = I}Z^{I_1}u_1\cdot Z^{I_2}u_2\cdots Z_{i_0}Z^{I_m}u_m
\\
=:& S_1 + S_2 + \cdots S_m.
\endaligned
\end{equation}
We will prove that
\begin{equation}\label{eq 3 pr lem Leibniz}
S_l = \sum_{\mathscr{D}_{ml}(I')}Z^{I'_1}u_1\cdot Z^{I'_2}u_2\cdots Z^{I'_m}u_m.
\end{equation}

Recall $S_l$ is a sum over $\mathscr{D}_m(I)$. We denote by $\mathcal{S}_l =: \{$the terms in $S_l\}$. Then there is a bijection $i_l$ from $\mathcal{S}_l$ to $\mathscr{D}_m$. Then $ p_l\circ i_l$ is a bijection from $\mathcal{S}_l$ to $\mathscr{D}_{ml}(I')$. Furthermore, for a term
\begin{equation}\label{eq 4 pr lem Leibniz}
Z^{I_1}u_1\cdot Z^{I_2}u_2\cdots Z_{i_0}Z^{I_l}u_l\cdots Z^{I_m}u_m\in \mathcal{S}_l
\end{equation}
its image under $p_l\circ i_l$ is
\begin{equation}\label{eq 4' pr lem Leibniz}
(\mathcal{I'}_1,\mathcal{I'}_2,\cdots, \mathcal{I'}_l,\cdots \mathcal{I'}_m).
\end{equation}
The realization of the above partition is by definition contained in right-hand-side of \eqref{eq 3 pr lem Leibniz}. Furthermore, the realization of each $\mathcal{I'}_k$ is $Z^{I_k}$ when $k\neq l$ or $Z_{i_0}Z^{I_l}$ when $k=l$. This leads to the fact that the realization of \eqref{eq 4' pr lem Leibniz} is exactly \eqref{eq 4 pr lem Leibniz}. This establishes an one to one correspondence from $\mathcal{S}_l$ to the terms in right-hand-side of \eqref{eq 3 pr lem Leibniz}.

Then, recall that $\mathscr{D}_{ml}(I')$ is a partition of $\mathscr{D}_m(I')$. So
$$
\sum_{I_1'+I_2'+\cdots +I_m'=I'}Z^{I_1'}u_1\cdot Z^{I_2'}u_2\cdots Z^{I_m'}u_m = \sum_{l=1}^m\sum_{\mathscr{D}_{ml}(I')} Z^{I_1'}u_1\cdot Z^{I_2'}u_2\cdots Z^{I_m'}u_m.
$$
And this together with \eqref{eq 3 pr lem Leibniz} leads to the desired result.

For \eqref{eq 2 Leibniz}, we just apply twice \eqref{eq 1 Leibniz}.
\end{proof}

For the proof of lemma \ref{lem 1 faa}, we need the following observation.
We denote by
$$
\mathscr{D}_{m0}^*(I') = \big\{\{\mathcal{I}_1,\mathcal{I}_2,\cdots,\mathcal{I}_m\}\in\mathscr{D}^*_m(I')\big|\mathcal{I}_1 = \{(0,i_0)\}\big\}
$$
and
$$
\mathscr{D}_m^*(I')^+ = \mathscr{D}_m^*(I')\backslash \mathscr{D}_{m0}^*(I').
$$

Then we define
$$
\mathscr{D}_{ml}^*(I') := \big\{\{\mathcal{I}_1,\mathcal{I}_2,\cdots,\mathcal{I}_m\}\in\mathscr{D}^*_m(I')^+\big| (0,i_0)\in\mathcal{I}_l \big\}.
$$

Now as the case of $m-$ decomposition, we establish the following bijections:
$$
\aligned
p_0:\ \mathscr{D}_{m-1}^*(I) &\rightarrow \mathscr{D}_{m0}^*(I')
\\
\{\mathcal{I}_1,\cdots \mathcal{I}_{m-1}\}&\rightarrow\{(0,i_0),\mathcal{I}_1,\cdots ,\mathcal{I}_{m-1}\}
\endaligned
$$
and for $m\geq l\geq 1$,
$$
\aligned
p_l:\  \mathscr{D}_m^*(I) &\rightarrow\mathscr{D}_{ml}^*(I')
\\
\{\mathcal{I}_1,\mathcal{I}_2\cdots \mathcal{I}_m\}&\rightarrow\{\mathcal{I'}_1,\mathcal{I'}_2\cdots ,\mathcal{I'}_m\}
\endaligned
$$
with
$$
\mathcal{I'}_k = \mathcal{I}_k \quad\text{if } k\neq l,\quad  \mathcal{I'}_l = \mathcal{I}_l\cup \{(0,i_0)\}.
$$
Then we have the following results:
\begin{lemma}\label{lem 2 partition}
With the above notation, $p_l^*$ are bijective for $0\leq l\leq m$.
\end{lemma}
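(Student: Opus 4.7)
The approach mirrors the proof of Lemma \ref{lem 1 partition}: for each $l\in\{0,1,\dots,m\}$ I would build an explicit two-sided inverse of $p_l$ by simply removing the pair $(0,i_0)$ from the block that contains it. Injectivity is immediate in each case because the map inserts the fixed pair $(0,i_0)$ in a prescribed location (a new singleton block for $l=0$, or the $l$-th existing block for $l\ge 1$), so two distinct source partitions yield different images. The whole content of the lemma is therefore surjectivity, and the key structural input is the identity $\mathcal{G}(I)=\mathcal{G}(I')\setminus\{(0,i_0)\}$.

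For $p_0$, let $\{\mathcal{I}_1,\mathcal{I}_2,\dots,\mathcal{I}_m\}\in\mathscr{D}_{m0}^*(I')$. By the definition of $\mathscr{D}_{m0}^*(I')$ we have $\mathcal{I}_1=\{(0,i_0)\}$, so the remaining blocks $\mathcal{I}_2,\dots,\mathcal{I}_m$ are all non-empty, pairwise disjoint, and lie inside $\mathcal{G}(I')\setminus\{(0,i_0)\}=\mathcal{G}(I)$; their union is exactly $\mathcal{G}(I)$. Hence $\{\mathcal{I}_2,\dots,\mathcal{I}_m\}\in\mathscr{D}_{m-1}^*(I)$ is the desired preimage under $p_0$. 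For $p_l$ with $l\ge 1$, let $\{\mathcal{I'}_1,\dots,\mathcal{I'}_m\}\in\mathscr{D}_{ml}^*(I')$. Here the $^+$-restriction is the crucial point: since we work in $\mathscr{D}_m^*(I')^+=\mathscr{D}_m^*(I')\setminus\mathscr{D}_{m0}^*(I')$, the block $\mathcal{I'}_l$ containing $(0,i_0)$ is \emph{not} the singleton $\{(0,i_0)\}$, so $\mathcal{I}_l:=\mathcal{I'}_l\setminus\{(0,i_0)\}$ is non-empty. Replacing $\mathcal{I'}_l$ by $\mathcal{I}_l$ and keeping the other blocks yields an $m$-element set of non-empty, pairwise disjoint subsets of $\mathcal{G}(I)$ whose union is $\mathcal{G}(I)$, hence a preimage in $\mathscr{D}_m^*(I)$. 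Applying $p_l$ to either preimage recovers the starting partition, so $p_l$ is bijective.

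The only subtlety, and what I expect to be the genuine point of care rather than a serious obstacle, is the total-order convention $\mathcal{I}_j\prec\mathcal{I}_k\Leftrightarrow j<k$: removing or adding $(0,i_0)$ can shift the minimum of a block and therefore re-label the tuple. However, as partitions are genuinely sets of blocks and the ordering is only a canonical way of writing them down, the bijection is between the underlying unordered partition structures and the re-indexing is itself a bijection. One needs to check only that the distinguished position of $(0,i_0)$ is respected by the inverse construction, which is immediate from $0<\min D(\mathcal{I}_j)$ for every $\mathcal{I}_j\subset\mathcal{G}(I)$. This concludes bijectivity for all $0\le l\le m$.
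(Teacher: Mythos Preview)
Your proposal is correct and follows essentially the same approach as the paper: injectivity is declared immediate, and surjectivity is established by explicitly removing the pair $(0,i_0)$ from the block that contains it to build the preimage. In fact you are somewhat more careful than the paper's own proof, since you explicitly invoke the $^+$-restriction to guarantee that $\mathcal{I'}_l\setminus\{(0,i_0)\}$ remains non-empty for $1\le l\le m$, and you flag the re-indexing issue induced by the total-order convention, neither of which the paper's proof mentions.
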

\begin{proof}
It is clear that $p_l^*$ are injective. To prove that they are surjective, we make the following observation. When $l=0$, for any partition in $\mathscr{D}_{m0}^*(I')$, by definition we see that it is of the following form:
$$
\{(0,i_0),\mathcal{I}_1,\cdots ,\mathcal{I}_{m-1}\}
$$
and $\mathcal{I}_j\subset \mathcal{G}(I)$ for $j=1,2,\cdots, m-1$. Also, $\mathcal{I}_j\cap\mathcal{I}_k = \emptyset$ when $j\neq k$. Thus $\{\mathcal{I}_1,\cdots ,\mathcal{I}_{m-1}\}\in\mathscr{D}_{m-1}^*(I)$ is the $p_0^*$ preimage of $\{(0,i_0),\mathcal{I}_1,\cdots ,\mathcal{I}_{m-1}\}$ which proves that $p_0^*$ is surjective.

For $1\leq l\leq m$. Let $\{\mathcal{I'}_1,\cdots ,\mathcal{I'}_m\}\in\mathscr{D}_{ml}^*(I')$. By definition
$$
(0,i_0)\in\mathcal{I'}_l.
$$
Then we define
\begin{equation}\label{eq 1 pr lem 2 partition}
(\mathcal{I}_1,\mathcal{I}_2,\cdots \mathcal{I}_k)
\end{equation}
with $\mathcal{I}_j = \mathcal{I'}_j$ when $j\neq l$ and $\mathcal{I}_l = \mathcal{I'}_l\backslash \{(0,i_0)\}$. Also, $D(\mathcal{I'}_j)\subset\{1,2,\cdots N\}$. Then \eqref{eq 1 pr lem 2 partition} is the contained in $\mathscr{D}_{m}^*(I')$ which is the preimage of $\{\mathcal{I'}_1,\cdots ,\mathcal{I'}_m\}$. This proves that $p_l^*$ are surjective.
\end{proof}

\begin{proof}[Proof of lemma \ref{lem 1 faa}]
This is checked by induction on $|I|$. For $|I|=1$ this is the chain rule. Suppose that \eqref{eq 1 faa} is valid for  $|I|\leq N$. For $Z_{i_0}\in\mathscr{Z}$, we consider
$$
\aligned
Z_{i_0}Z^If(u) =& Z_{i_0}\left(\sum_{1\leq k\leq |I|}f^{(k)}(u)\sum_{I_1+I_2+\cdots I_k\seq I}Z^{I_1}uZ^{I_2}u\cdots Z^{I_k}u\right)
\\
=&\sum_{1\leq k\leq |I|}f^{(k+1)}(u)\sum_{I_1+I_2+\cdots I_k\seq I}Z_{i_0}u \cdot Z^{I_1}uZ^{I_2}u\cdots Z^{I_k}u
\\
&+ \sum_{1\leq k\leq |I|}f^{(k)}(u)\sum_{I_1+I_2+\cdots I_k\seq I}Z_{i_0}\big(Z^{I_1}uZ^{I_2}u\cdots Z^{I_k}u\big)
\\
=& f'(u)Z_{i_0}Z^Iu
\\
&+ \sum_{k=2}^{|I|+1}f^{(k)}(u)\!\!\!\!\!\sum_{I_1+I_2\cdots+I_{k-1}\seq I}\!\!\!\!\!Z_{i_0}u\cdot Z^{I_1}uZ^{I_2}u\cdots Z^{I_{k-1}}u
\\
&+ \sum_{k=2}^{|I|+1}f^{(k)}(u)\sum_{I_1+I_2\cdots+I_k\seq I} Z_{i_0}\big(Z^{I_1}uZ^{I_2}u\cdots Z^{I_k}u\big)
\\
=:&f'(u)\cdot L_1 + \sum_{k=2}^{|I|+1}f^{(k)}\cdot L_k.
\endaligned
$$
On the other hand we denote by
$$
R_k := \sum_{I'_1+I'_2+\cdots +I'_k \seq I'}Z^{I'_1}uZ^{I'_2}u\cdots Z^{I'_k}u
$$
Then
$$
\sum_{k=1}^{|I|'}f^{(k)}(u)Z^{I'_1}uZ^{I'_2}u\cdots Z^{I'_k}u = \sum_{k=1}^{|I|'}f^{(k)}(u)R_k.
$$
So we only need to prove
\begin{equation}\label{eq 1 pr lem 1 faa}
L_k = R_k,\quad k=1,2,\cdots, |I'| = |I|+1.
\end{equation}

When $k=1$,
$$
R_1 = Z^{I'}u = Z_{i_0}Z^Iu=L_1.
$$

When $k>1$, we decompose $\mathscr{D}_k^*(I')$ into $k+1$ disjoint subsets:
$$
\mathscr{D}_k^*(I') = \bigcup_{l=0}^k\mathscr{D}_{kl}^*(I').
$$
and we write $R_k$ in the following form:
$$
R_k = \sum_{l=0}^kR_{kl}
$$
with
$$
R_{kl} := \sum_{\mathscr{D}_{kl}^*(I')}\!\!\!\!\! Z^{I_1'}uZ^{I_2'}u\cdots Z^{I_k'}u.
$$

On the other hand, we write $L_k$ in to the following form:
$$
L_k = \sum_{l=0}^kL_{kl}
$$
with
$$
L_{k0} := \sum_{I_1+I_2+\cdots +I_{k-1}\seq I}Z_{i_0}u Z^{I_1}u\cdots Z^{I_{k-1}}u,
$$
and for $1\leq l\leq k$,
$$
L_{kl} := \sum_{I_1+I_2+\cdots +I_k \seq I}Z^{I_1}u\cdots Z_{i_0}Z^{I_l}u\cdots Z^{I_k}u.
$$
Then, remark that $L_{kl}$ is a sum over $\mathscr{D}_k^*(I)$ or $\mathscr{D}_{k-1}^*(I)$.

\eqref{eq 1 pr lem 1 faa} is guaranteed by
\begin{equation}\label{eq 2 pr lem 1 faa}
L_{kl} = R_{kl}.
\end{equation}
When  $l = 0$, recall the bijection $p_0^*$ from $\mathscr{D}^*_{k-1}(I)$ to $\mathscr{D}_{k0}^*(I')$. Let
\begin{equation}\label{eq 3 pr lem 1 faa}
(\mathcal{I}_1,\mathcal{I}_2,\cdots, \mathcal{I}_{k-1})\in\mathscr{D}^*_{k-1}(I)
\end{equation}
and we see
\begin{equation}\label{eq 4 pr lem 1 faa}
((0,i_0),\mathcal{I}_1,\mathcal{I}_2,\cdots, \mathcal{I}_{k-1}) = p_0^*(\mathcal{I}_1,\mathcal{I}_2,\cdots, \mathcal{I}_{k-1})
\end{equation}
We remark that the term in $L_{k0}$ corresponding to \eqref{eq 3 pr lem 1 faa} equals to the term in $R_{k0}$ corresponding to the $p_0^*$ image of \eqref{eq 3 pr lem 1 faa}. Then $L_{k0} = R_{k0}$.

In the same manner, let
\begin{equation}\label{eq 5 pr lem 1 faa}
(\mathcal{I}_1,\cdots, \mathcal{I}_k)\in \mathscr{D}_k^*(I)
\end{equation}
and denote by
\begin{equation}\label{eq 6 pr lem 1 faa}
(\mathcal{I'}_1,\cdots, \mathcal{I'}_k) = p_l^*(\mathcal{I}_1,\cdots, \mathcal{I}_k).
\end{equation}
Then we remark that the term in $L_{kl}$ corresponding to \eqref{eq 5 pr lem 1 faa} equals to the term in $R_{kl}$ corresponding to \eqref{eq 6 pr lem 1 faa}. Thus $L_{kl} = R_{kl}$.

Then we conclude by induction the desired result.
\end{proof}

\bibliographystyle{elsarticle-num}
\bibliography{RCbib}

\end{document}